\numberwithin{equation}{section}
\theoremstyle{plain}
\newtheorem{prop}{Proposition}[section]
\newtheorem{thm}[prop]{Theorem}
\newtheorem{cor}[prop]{Corollary}
\newtheorem{lem}[prop]{Lemma}
\newtheorem{lemdfn}[prop]{Lemma and Definition}
\theoremstyle{definition}
\newtheorem{dfn}[prop]{Definition}
\newtheorem{lab}[prop]{}
\theoremstyle{remark}
\newtheorem{examples}[prop]{Examples}
\newtheorem{rem}[prop]{Remark}
\newtheorem{rems}[prop]{Remarks}
\newcommand{\isoto}{\overset{\sim}{\to}}
\newcommand{\into}{\hookrightarrow}
\newcommand{\To}{\Rightarrow}
\renewcommand{\iff}{\Leftrightarrow}
\newcommand{\A}{{\mathbb{A}}}
\renewcommand{\P}{{\mathbb{P}}}
\newcommand{\R}{{\mathbb{R}}}
\newcommand{\Z}{{\mathbb{Z}}}
\newcommand{\scrO}{{\mathscr O}}
\DeclareMathOperator{\Pic}{Pic}
\DeclareMathOperator{\Quot}{Quot}
\DeclareMathOperator{\Spec}{Spec}
\DeclareMathOperator{\Sper}{Sper}
\DeclareMathOperator{\supp}{supp}
\DeclareMathOperator{\trdeg}{trdeg}
\DeclareTextFontCommand{\textnf}{\normalfont}
\newcommand{\Cl}{\mathrm{Cl}}
\newcommand{\dom}{\mathrm{dom}}
\newcommand{\inter}{\mathrm{int}}
\newcommand{\reg}{\mathrm{reg}}
\newcommand{\sing}{\mathrm{sing}}
\newcommand{\comp}{\mathbin{\scriptstyle\circ}} % composition
\renewcommand{\emptyset}{\varnothing}
\newcommand{\ol}{\overline}
\newcommand{\wt}[1]{\widetilde{#1}}
\renewcommand{\setminus}{\smallsetminus}
\renewcommand{\epsilon}{\varepsilon}
\renewcommand{\theta}{\vartheta}
\newcommand{\ratto}{\dashrightarrow}
\newcommand{\spez}{\rightsquigarrow}
\newcommand{\sa}{semi-algebraic}
\begin{document}

\title
[The ring of bounded polynomials on a semi-algebraic set]
{The ring of bounded polynomials on a semi-algebraic set}

\author{Daniel Plaumann}
\address{Universit\"at Konstanz,
      Fachbereich Mathematik und Statistik,
      78457 Konstanz,
      Germany}
\email{daniel.plaumann@uni-konstanz.de, claus.scheiderer@uni-konstanz.de}
\author{Claus Scheiderer}
\keywords
{Real algebraic varieties, bounded polynomials, normal varieties,
divisors, finite generation of rings of global sections, algebraic
surfaces}
\subjclass[2000]
{Primary 14P99; secondary 14C20, 14E15, 14P05}

\date{\today}

\begin{abstract}
Let $V$ be a normal affine $\R$-variety, and let $S$ be a \sa\ subset
of $V(\R)$ which is Zariski dense in $V$. We study the subring $B_V
(S)$ of $\R[V]$ consisting of the polynomials that are bounded on
$S$. We introduce the notion of $S$-compatible completions of $V$,
and we prove the existence of such completions when $\dim(V)\le2$ or
$S=V(\R)$. An $S$-compatible completion $X$ of $V$ yields a ring
isomorphism $\scrO_U(U)\isoto B_V(S)$ for some (concretely specified)
open subvariety $U\supset V$ of $X$. We prove that $B_V(S)$ is a
finitely generated $\R$-algebra if $\dim(V)\le2$ and $S$ is open, and
we show that this result becomes false in general when $\dim(V)\ge3$.
\end{abstract}

\maketitle

%===================================================================%

\section*{Introduction}

The general question studied in this paper can be stated as follows:
Let $S$ be a \sa\ subset of $\R^n$ (i.e.~a subset described by
polynomial inequalities). How can we describe (conceptually or
explicitly) the ring of polynomials that are bounded on $S$?

To address this question we will work in the following setup. Let
$V$ be an affine variety defined over $\R$, and let $S$ be a \sa\
subset of $V(\R)$.
We write $\R[V]$ for the ring of real polynomial functions on $V$
(the coordinate ring of $V$) and consider its subring
$$B_V(S)=\bigl\{f\in\R[V]\colon f|_S\text{ is bounded}\}.$$
The first systematic study of $B_V(S)$, in the case $S=V(\R)$, was
undertaken in 1996 by Becker and Powers \cite{BP}. Their results have
been generalized substantially
by Monnier \cite{Mo} and Schweighofer \cite{Sw:hol}. The emphasis
there is on iterating the $B_V$ construction (which requires a more
abstract definition via the real spectrum), and on relations to sums
of squares, sums of higher powers, and certificates for positivity;
see also Marshall \cite{Ma}. Motivation came in part from earlier
work on the so-called holomorphy ring in the theory of real rings and
fields (see \cite{Be} and \cite{BP} and references given there). In
particular, rings $A$ were studied in which all elements of the form
$1+\sum_ia_i^2$ with $a_i\in A$ are invertible.
Of course, this condition is hardly ever satisfied for $A=\R[V]$.

A principal difficulty in studying $B_V(S)$ is that these rings need
not be of finite type over $\R$. For example, this is so for
elementary reasons when $S$ is neither relatively compact nor Zariski
dense in $V$ (Cor.\ \ref{thinnotfg}). We will show, however, that
there exist other and more genuine examples as well (Cor.\
\ref{bvsnotft}). In \cite{BP}, \cite{Mo} and \cite{Sw:hol}, this
difficulty was overcome, to a certain extent, by working in the real
spectrum and using sophisticated arguments from real algebra.

Our investigations go in a somewhat different direction. We seek to
understand the structure of $B_V(S)$ in terms of the geometry of $S$,
in particular by constructing algebraic compactifications of $V$ that
are suitably adapted to $S$. To describe the main results of this
paper, let us make the simplifying assumption that the affine variety
$V$ is non-singular and connected. Using resolution of singularities,
we construct a quasi-projective variety $U$ containing $V$ as an open
dense subset such that $\scrO_U(U)$ restricts isomorphically onto
$B_V(V(\R))$ (Thm.\ \ref{gococalxbs} and Thm.\ \ref{exgocovr}). We
also generalize this construction from $S=V(\R)$ to more general \sa\
sets $S\subset V(\R)$. However we have been able to prove a
satisfactory general result only for $\dim(V)\le2$ (Thm.\
\ref{exgocoreginf}).  Combining this with a theorem of Zariski, we
prove that the $\R$-algebra $B_V(S)$ is finitely generated when $\dim
(V)\le2$ and the \sa\ set $S$ satisfies a weak regularity condition
(Thm.\ \ref{bvsfg}).

Here is a brief survey of the contents of this paper. After
introducing the necessary terminology, we fix a connected normal
affine $\R$-variety $V$ and a closed \sa\ set $S\subset V(\R)$, and
we study morphisms $V\to W$ into affine $\R$-varieties $W$ which are
bounded on $S$. From this we obtain a characterization of the field
of fractions of $B_V(S)$ in geometric terms (Prop.\ \ref{omegaprop}).
In particular, we show that $B_V(S)$ has full transcendence degree
$\dim(V)$ over $\R$ if, and only if, there exists a non-constant
$f\in B_V(S)$ such that the set $f^{-1}(c)\cap S$ is compact for
almost all real numbers $c$ (Thm.\ \ref{4conds}).

In Sect.~3 we introduce the notion of compatible completions. Given
a normal affine variety $V$ and a \sa\ set $S\subset V(\R)$, a
complete variety $X$ containing $V$ as an open dense set is said to
be compatible with $S$ if $S$ touches every irreducible component of
$X\setminus V$ in a Zariski dense subset, provided it touches that
component at all. Removing those components from $X$ that are not
touched by $S$, we obtain an open subvariety $U\subset X$ containing
$V$, and we show $\scrO_U(U)\isoto B_V(S)$ (Thm.\ \ref{gococalxbs}).
The existence of compatible completions is studied in Sect.~4.
Relative to $S=V(\R)$, every non-singular affine variety $V$ has a
compatible completion (Thm.\ \ref{exgocovr}). Relative to more
general \sa\ subsets of $V(\R)$, we can prove such a result for $\dim
(V)\le2$ (Thm.\ \ref{exgocoreginf}). Turning things around, we prove
in Thm.\
\ref{owalsbs} that for every normal real quasi-projective variety $U$
there is an open affine subset $V\subset U$ and a \sa\ set $S\subset
V(\R)$ with $\scrO_U(U)\isoto B_V(S)$. In Sect.~5 we finally study
finite generation of the $\R$-algebra $B_V(S)$. After showing that
$B_V(S)$ is not noetherian when $S$ is not Zariski dense in $V$,
we prove finite generation of $B_V(S)$ for $\dim(V)\le2$ (Thm.\
\ref{bvsfg}), using Zariski's theorem. We also show that this result
becomes false when $\dim(V)\ge3$.

\smallskip \emph{Acknowledgements:} Concerning the final section of
this paper, we greatly profitted from a correspondence with Sebastian
Krug, who provided us with examples and constructions in the context
of Hilbert's fourteenth problem, going beyond what went into this
paper.  We are also grateful to Fabrizio Catanese for suggesting the
connection to Zariski's work.

%-------------------------------------------------------------------%

\section{Notations and conventions}

\begin{lab}\label{rvar}
By an $\R$-variety we mean a separated and reduced $\R$-scheme $V$ of
finite type, not necessarily irreducible. The structural sheaf of $V$
is written $\scrO_V$. If $V$ is affine, we write $\R[V]:=\scrO_V(V)$
for the coordinate ring of $V$. If $V$ is irreducible, then $\R(V)$
denotes the function field of $V$.

The set $V(\R)$ of $\R$-rational points is endowed with the euclidean
topology. The notion of \sa\ subsets of $V(\R)$ is well-known when
$V$ is affine, and is easily transferred to the general case, a
subset $S\subset V(\R)$ being \sa\ if and only if $S\cap U(\R)$ is
\sa\ in $U(\R)$ for every open affine subset $U$ of $V$.

An irreducible $\R$-variety $V$ is said to be \emph{real} if it has a
non-singular $\R$-point, or equivalently, if the function field
$\R(V)$ is (formally) real. It is also equivalent that $V(\R)$ is
Zariski dense in $V$.
\end{lab}

\begin{lab}
Let $f\colon V\to W$ be a morphism of $\R$-varieties, and let $S$ be
a subset of $V(\R)$. We say that $f$ is \emph{bounded on $S$} if the
closure of $f(S)$ in $W(\R)$ is compact.

In particular, this applies in the case $W=\A^1$, i.e., when $f$ is
a regular function on $V$. The main object of this paper is to study
the subring
$$B_V(S):=\{f\in\R[V]\colon f\text{ is bounded on }S\}$$
of $\R[V]$, for $V$ an affine $\R$-variety. Here are some immediate
observations:
\end{lab}

\begin{lem}\label{soriten}
Let $V$ be an affine $\R$-variety, let $S$, $S'$ be subsets of
$V(\R)$.
\begin{itemize}
\item[(a)]
$B_V(\ol S)=B_V(S)$;
\item[(b)]
$B_V(S)=\R[V]$ if and only if $\ol S$ is compact;
\item[(c)]
$B_V(S\cup S')=B_V(S)\cap B_V(S')$;
\item[(d)]
if $W$ is the (reduced) Zariski closure of $S$ in $V$ and $I$ is the
vanishing ideal of $W$ in $\R[V]$, then $B_V(S)$ contains $I$,
and $B_W(S)=B_V(S)/I$ as subrings of $\R[V]/I=\R[W]$;
\item[(e)]
the subring $B_V(S)$ is relatively integrally closed in $\R[V]$.
\end{itemize}
\end{lem}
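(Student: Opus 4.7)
The plan is to verify each of the five assertions directly from the definition of $B_V(S)$, using continuity of polynomial functions on $V(\R)$ together with a closed embedding of $V$ into affine space. For (a), continuity shows that any bound $|f|\le M$ on $S$ extends to $\ol S$, while the reverse inclusion is trivial. Part (c) is purely formal: $f$ is bounded on $S\cup S'$ iff it is bounded on each piece. For (b), fix a closed embedding $V\into\A^N$, so that $V(\R)$ becomes a closed subset of $\R^N$. Then $\ol S$ is compact iff it is bounded in $\R^N$. If it is bounded, every polynomial is bounded on $\ol S$, whence $B_V(S)=\R[V]$ by (a); if it is unbounded, at least one coordinate function is unbounded on $\ol S$, hence on $S$ by (a), so it lies outside $B_V(S)$.

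For (d), each $f\in I$ vanishes identically on $S\subset W(\R)$ and is therefore trivially bounded, so $I\subset B_V(S)$. Conversely, given $\bar f\in B_W(S)$, any lift $f\in\R[V]$ under the projection $\pi\colon\R[V]\to\R[W]=\R[V]/I$ agrees with $\bar f$ on $W(\R)$, in particular on $S$, so $f\in B_V(S)$. Thus $\pi$ restricts to a surjection $B_V(S)\to B_W(S)$ with kernel $I\cap B_V(S)=I$, which gives the claimed identification.

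Part (e) is the only step requiring a genuine estimate, and it is the standard one used for integral closure. Suppose $f\in\R[V]$ satisfies $f^n+a_{n-1}f^{n-1}+\cdots+a_0=0$ with $a_i\in B_V(S)$, and choose $M\ge 1$ with $|a_i|\le M$ on $S$ for all $i$. For $x\in S$ with $|f(x)|\ge 1$, the relation gives
$$|f(x)|^n\le\sum_{i=0}^{n-1}|a_i(x)|\,|f(x)|^i\le nM\,|f(x)|^{n-1},$$
so $|f(x)|\le nM$; combining both cases, $|f|\le\max(1,nM)$ on all of $S$, i.e.\ $f\in B_V(S)$. None of these steps presents any real obstacle; the lemma merely collects the elementary formal properties of $B_V(S)$ that will be used repeatedly in what follows.
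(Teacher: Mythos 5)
Your proof is correct and matches the paper's approach: the paper records only the verification of (e), using the same standard root-bound argument (it gets the constant $1+\max_i|a_i|$ where you get $\max(1,nM)$, an immaterial difference), and treats (a)--(d) as the routine observations you spell out.
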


\begin{proof}
For (e), observe that a relation $f^n+a_1f^{n-1}+\cdots+a_n=0$ with
$f\in\R[V]$ and $a_1,\dots,a_n\in B_V(S)$ implies that $|f|\le1+\max
_i|a_i|$ holds (pointwise) on $V(\R)$, hence $f$ is bounded on $S$.
\end{proof}

\begin{lab}
For the entire paper, our base field will be the field $\R$ of real
numbers. All results remain true when $\R$ is replaced by an
arbitrary real closed field and bounded is replaced by bounded over
$R$, compact by \sa ally compact etc.
\end{lab}

%-------------------------------------------------------------------%

\section{Fibres of bounded morphisms}

Throughout this section we assume that $V$ is an \emph{irreducible}
affine $\R$-variety and that $S$ is a fixed \sa\ subset of $V(\R)$.

\begin{lab}
Let $\varphi\colon V\to W$ be a morphism of $\R$-varieties. Given
$y\in W(\R)$, we write $S_y:=\{x\in S\colon\varphi(x)=y\}$ for the
fibre of $y$ in $S$. Given $f\in\R[V]$, we define
$$\Omega_\varphi(f)\>:=\>\bigl\{y\in W(\R)\colon f\text{ is unbounded
on }S_y\bigr\}.$$
This is a \sa\ subset of $W(\R)$ which is contained in $\varphi(S)$.
\end{lab}

\begin{prop}\label{omegaprop}
Let $B=B_V(S)$. For $f\in\R[V]$, the following are equivalent:
\begin{itemize}
\item[(i)]
$f\in\Quot(B)$;
\item[(ii)]
there exists a dominant morphism $\varphi\colon V\to W$ of affine
$\R$-varieties which is bounded on $S$ such that $\Omega_\varphi(f)$
is not Zariski dense in $W$.
\end{itemize}
\end{prop}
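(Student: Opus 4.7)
For (i) $\Rightarrow$ (ii), I would write $f = g/h$ with $g,h \in B := B_V(S)$ and $h \neq 0$, take $\varphi := (g,h) \colon V \to \A^2$, and let $W$ be the Zariski closure of $\varphi(V)$ in $\A^2$. Then $\varphi \colon V \to W$ is dominant by construction, bounded on $S$ because $g,h \in B$, and for any $y = (a,b) \in W(\R)$ with $b \neq 0$ the identity $hf = g$ in $\R[V]$ forces $f \equiv a/b$ on the fibre $S_y$. Hence $\Omega_\varphi(f)$ is contained in $W \cap \{y_2 = 0\}$, a proper closed subvariety of $W$ (the coordinate $y_2$ pulls back to $h \neq 0$), and therefore is not Zariski dense.

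For (ii) $\Rightarrow$ (i), my strategy is to exhibit a nonzero $h \in \R[W]$ and an integer $N \ge 1$ such that $h(\varphi)^N f$ is bounded on $S$. This suffices: both $h(\varphi)^N$ and $h(\varphi)^N f$ will lie in $B$ (the former because $\varphi$ is bounded on $S$, so any $h \circ \varphi$ is), while $h(\varphi) \neq 0$ in $\R[V]$ by dominance of $\varphi$, giving $f = (h(\varphi)^N f)/h(\varphi)^N \in \Quot(B)$.

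To produce such $h$ and $N$, I would consider the semi-algebraic function $M \colon \ol{\varphi(S)} \to [0,+\infty]$ defined by $M(y) = \sup\{|f(x)| : x \in S_y\}$. Its domain is a compact semi-algebraic subset of $W(\R)$ since $\varphi$ is bounded on $S$, and $\{M = +\infty\}$ is exactly $\Omega_\varphi(f)$. If $h \in \R[W]$ is chosen to vanish on a suitable Zariski closed subvariety of $W$, a semi-algebraic \L{}ojasiewicz inequality applied to $M$ and $h$ on the compact set $\ol{\varphi(S)}$ should yield constants $N,C > 0$ with $|h(y)|^N M(y) \le C$, which rewrites as $|h(\varphi(x))^N f(x)| \le C$ on $S$, as desired.

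The main obstacle I anticipate lies in the choice of $h$: it is \emph{not} enough to take $h$ vanishing on $Z := \ol{\Omega_\varphi(f)}^{\mathrm{Zar}}$, because $M$ may fail to be locally bounded at points of $\ol{\varphi(S)}$ lying outside $\Omega_\varphi(f)$ --- a fibre $S_y$ can be empty or too thin to witness a blow-up that already occurs in neighbouring fibres. One has to enlarge $Z$ to the Zariski closure of the full semi-algebraic locus $\wt\Omega \supset \Omega_\varphi(f)$ along which $M$ is not locally bounded, and verify, via Tarski--Seidenberg and dimension theory for semi-algebraic sets, that this enlarged closure is still a proper subvariety of $W$, so that nonzero polynomials on $W$ vanishing on it exist. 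Once that geometric step is in place, the analytic \L{}ojasiewicz bound concludes the argument.
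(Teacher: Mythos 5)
Your argument for (i) $\Rightarrow$ (ii) is correct and is essentially the paper's: the paper takes $W=\Spec(C)$ for a finitely generated subalgebra $C\subset B$ containing $h$ and $fh$, and observes that $h$ and $fh$ are constant on each fibre $S_y$, forcing $\Omega_\varphi(f)\subset Z(h)$; your choice $\varphi=(g,h)\colon V\to\A^2$ with $W=\ol{\varphi(V)}$ is the special case $C=\R[g,h]$ and works for the same reason. For (ii) $\Rightarrow$ (i) your strategy also coincides with the paper's (produce $h\in\R[W]$, $h\ne0$, and $N$ with $h(\varphi)^Nf$ bounded on $S$, via a \L ojasiewicz inequality on the compact set $\ol{\varphi(S)}$), and you have correctly identified the main geometric point: $h$ must vanish not just on the Zariski closure of $\Omega_\varphi(f)$ but on the closure of the locus where the fibrewise supremum fails to be locally bounded, and one must check that this larger set is still not Zariski dense --- the paper does exactly this by taking $D$ to be the discontinuity locus of $\tilde f=(1+\sup_{S_y}|f|)^{-1}$ and deducing from a cylindrical algebraic decomposition that $D$ is not Zariski dense. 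The one step your sketch leaves genuinely unaddressed is that the \L ojasiewicz inequality cannot be ``applied to $M$ and $h$'' as stated: $M$ is neither real-valued nor continuous, and the inequality (\cite{BCR}, Cor.~2.6.7) requires two \emph{continuous} semi-algebraic functions with an inclusion of zero sets. The paper bridges this with Lemma~\ref{contminorant}: it constructs a continuous semi-algebraic minorant $\tilde h$ with $|\tilde h|\le\tilde f$ and $Z(\tilde h)\subset\ol D\cup\Omega_\varphi(f)$, applies \L ojasiewicz to the continuous pair $(h,\tilde h)$ to get $|h|^N\le c\,|\tilde h|\le c\,\tilde f$ on $\ol{\varphi(S)}$, and only then concludes $|f\cdot h(\varphi)^N|\le c$ on $S$. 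You would need this (or an equivalent regularization of $M$) to make your final step rigorous; with it, your proof is the paper's.
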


\begin{proof}
When $V$ is an affine $\R$-variety and $f\in\R[V]$, we write $Z(f):=
\{x\in V(\R)$: $f(x)=0\}$ for the zero set of $f$ in $V(\R)$. More
generally, if $M\subset V(\R)$ is a \sa\ subset and $f\colon M\to\R$
is a (\sa) map, we denote the zero set of $f$ in $M$ by $Z(f)$.

Assume $f\in\Quot(B)$, so there is $0\ne h\in B$ with $fh\in B$. Let
$C$ be any finitely generated subalgebra of $B$ containing $h$ and
$fh$, write $W=\Spec(C)$, and let $\varphi\colon V\to W$ be the
morphism induced by the inclusion $C\subset\R[V]$. Then $\varphi$ is
bounded on $S$. Let $y\in\Omega_\varphi(f)$, so $f$ is unbounded on
$S_y=S\cap\varphi^{-1}(y)$. On the other hand, $h$ and $fh$ lie in
$C=\R[W]$, so they are constant on $S_y$. Together, this implies
$h(y)=0$. So $\Omega_\varphi(f)$ is contained in $Z(h)$, and is
therefore not Zariski dense in $W$.

Conversely assume that there is a dominant morphism $\varphi\colon
V\to W$ as in (ii). Via $\varphi$, we consider $\R[W]$ as a subring
of $B_V(S)\subset\R[V]$. Write $\Omega:=\Omega_\varphi(f)$. We first
define a map $\tilde f\colon\ol{\varphi(S)}\to\R$ by
$$\tilde f(y)\>:=\>\begin{cases}\Bigl(1+\sup\{|f(x)|\colon x\in S_y\}
\Bigr)^{-1}&\text{if }S_y\ne\emptyset,\\\hfill1\hfill&\text{if }S_y=
\emptyset,\end{cases}$$
($y\in\ol{\varphi(S)}$), where we put $\frac1\infty:=0$. The map 
$\tilde f$ has \sa\ graph, and $\tilde f^{-1}(0)=\Omega$. Let $D$ be 
the set of points in $\ol{\varphi(S)}$ where $\tilde f$ fails to be 
continuous. Then $D$ is not Zariski dense in $W$. This can, for 
instance, easily be deduced from a cylindrical algebraic 
decomposition of the graph of $\tilde f$ (see \cite{BCR} Thm.\ 
2.3.1). Hence $D\cup\Omega$ is not Zariski dense in $W$ either.

By Lemma \ref{contminorant} below, there exists a continuous map
$\tilde h\colon\ol{\varphi(S)}\to R$ with \sa\ graph such that
$|\tilde h|\le\tilde f$ on $\ol{\varphi(S)}$, and such that
$Z(\tilde h)\subset\ol D\cup\Omega$. From the definition of
$\tilde f$ it is clear that $|f(x)\,\tilde h(\varphi(x))|\le|f(x)\,
\tilde f(\varphi(x))|<1$ for every $x\in S$.

Since $Z(\tilde h)$ is not Zariski dense in $W$, there exists $h\ne0$
in $\R[W]$ with $Z(\tilde h)\subset Z(h)$. Since $\ol{\varphi(S)}$ is
compact, the \L ojasiewicz inequality (see \cite{BCR}, Cor.~2.6.7)
implies that there
are $0<c\in\R$ and a positive integer $N$ such that $|h^N|\le c\cdot
|\tilde h|$ on $\ol{\varphi(S)}$. We conclude
$$|f(x)\,h(\varphi(x))^N|\le c\cdot|f(x)\,\tilde h(\varphi(x))|<c$$
for all $x\in S$. This shows that $fh^n$ is bounded on $S$, and so
$f=(fh^n)/h^n$ lies in $\Quot(B)$.
\end{proof}

The following easy fact was used in the last proof:

\begin{lem}\label{contminorant}
Let $M\subset\R^n$ be a \sa\ subset, and let $f\colon M\to\R$ be a
map with \sa\ graph and with $f\ge0$ on $M$. Let $D$ be the set of
points in $M$ where $f$ fails to be continuous. Then there exists a
continuous \sa\ function $g\colon M\to\R$ such that $|g|\le f$ on $M$
and such that $Z(g)\subset Z(f)\cup\ol D$.
\end{lem}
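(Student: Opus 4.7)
The plan is short: construct $g$ as the pointwise minimum of $f$ with a suitable continuous semi-algebraic function that vanishes exactly on $\overline D$. The natural candidate is the distance function to $\overline D$.

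First I would observe that the discontinuity locus $D$ is itself semi-algebraic (it is definable from the graph of $f$ by a first-order formula, so quantifier elimination applies); hence so is its closure $\overline D$ in $\R^n$. The distance function $\rho\colon M\to\R$ given by $\rho(x):=d(x,\overline D)$ is then a continuous semi-algebraic function on $M$ with $Z(\rho)=M\cap\overline D$. (If $\overline D=\emptyset$ one may simply take $g=f$.)

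I then propose $g(x):=\min\bigl(f(x),\,\rho(x)\bigr)$. Since $f\ge0$ and $\rho\ge0$, we have $0\le g\le f$, so $|g|\le f$. The graph of $g$ is semi-algebraic as the pointwise minimum of two semi-algebraic maps, and
\[
Z(g)\;=\;Z(f)\cup Z(\rho)\;=\;Z(f)\cup(M\cap\overline D)\;\subset\;Z(f)\cup\overline D,
\]
as required.

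The only substantive point is the continuity of $g$, which I would verify by cases. At a point $x_0\in M\setminus\overline D$ we have $\rho(x_0)>0$, and moreover $x_0\notin D$, so $f$ is continuous at $x_0$; hence so is $\min(f,\rho)$. At a point $x_0\in M\cap\overline D$ we have $\rho(x_0)=0$, so $g(x_0)=0$; and for any $x\in M$ near $x_0$, the estimate $0\le g(x)\le\rho(x)$ together with the continuity of $\rho$ forces $g(x)\to 0$ as $x\to x_0$. Thus $g$ is continuous on all of $M$. I do not anticipate any real obstacle; the whole argument hinges on the standard fact that the distance function to a semi-algebraic set is continuous and semi-algebraic.
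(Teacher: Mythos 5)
Your proof is correct and rests on the same key idea as the paper's: the distance function to the discontinuity locus $D$ is continuous, semi-algebraic, and vanishes exactly on $\ol D$, and combining it with $f$ yields the desired minorant. The only (inessential) difference is that you take $g=\min(f,d_{\ol D})$ while the paper first truncates $f$ to $\min\{f,1\}$ and then sets $g=f\cdot d_D/(1+d_D)$; both verifications of continuity at points of $\ol D$ proceed by the same estimate $0\le g\le d_D$.
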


\begin{proof}
Upon replacing $f$ by $\min\{f,1\}$ we may assume $f\le1$ on $M$.
The distance function $d_D\colon\R^n\to\R$, $d_D(x)=\inf\{|x-y|\colon
y\in D\}$ is continuous with \sa\ graph, and it vanishes precisely on
$\ol D$. The function
$$g(x)\>:=\>f(x)\cdot\frac{d_D(x)}{1+d_D(x)}\quad(x\in M)$$
has the desired properties.
Indeed, $f$ is continuous outside of
$\ol D$, and hence so is $g$. Fix $x\in M\cap\ol D$. Then $g(x)=0$,
and for all $y\in M$ we have $d_D(y)\le|y-x|$. This implies
$$0\le g(y)\le\frac{|y-x|}{1+|y-x|}$$
for all $y\in M$.
In particular, $g$ is continuous in $x$.
\end{proof}

\begin{lab}
We fix an affine irreducible $\R$-variety $V$ and a \sa\ set
$S\subset V(\R)$. Given a morphism $\varphi\colon V\to W$ of
$\R$-varieties, we put
$$\Omega_\varphi:=\bigcup_{f\in\R[V]}\Omega_\varphi(f).$$
If $\R[V]$ is generated by $x_1,\dots,x_n$ as an $\R$-algebra then
$\Omega_\varphi=\bigcup_{i=1}^n\Omega_\varphi(x_i)$. This shows that
$\Omega_\varphi$ is a \sa\ subset of $W(\R)$. Clearly,
$$\Omega_\varphi\>=\>\bigl\{y\in W(\R)\colon\ol{S_y}\text{ is not
compact}\bigr\}.$$
\end{lab}

Given an $\R$-algebra $B$, we define the transcendence degree $\trdeg
_\R(B)$ of $B$ as the maximum number of elements of $B$ which are
algebraically independent over~$\R$.

\begin{thm}\label{4conds}
Let $S\subset V(\R)$ be a \sa\ set, let $B=B_V(S)$. The
following conditions are equivalent:
\begin{itemize}
\item[(i)]
$\Quot(B)=\R(V)$;
\item[(ii)]
$\trdeg_\R(B)=\dim(V)$;
\item[(iii)]
there is a birational morphism $V\to W$ of affine varieties which is
bounded on $S$;
\item[(iv)]
there is a dominant morphism $\varphi\colon V\to W$ of affine
varieties which is bounded on $S$ such that $\Omega_\varphi$ is not
Zariski dense in $W$;
\item[(v)]
there is a non-constant $f\in\R[V]$, bounded on $S$, such that the
set $f^{-1}(c)\cap\ol S$ is compact for every $0\ne c\in\R$.
\end{itemize}
\end{thm}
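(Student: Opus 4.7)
The plan is to close the cycle (i) $\iff$ (ii), (i) $\To$ (iii) $\To$ (iv) $\To$ (v) $\To$ (iv) $\To$ (i), with only (iv) $\To$ (v) requiring substantial work; the other implications assemble from Proposition \ref{omegaprop}, Lemma \ref{soriten}, and the semi-algebraic tools already developed.

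The peripheral directions I would handle as follows. For (ii) $\To$ (i): if $\trdeg_\R(B) = \dim(V)$ then $\R(V)$ is algebraic over $\Quot(B)$, so a relation $a_nf^n + \cdots + a_0 = 0$ with $a_i \in B$ and $a_n \ne 0$ makes $a_nf \in \R[V]$ integral over $B$; Lemma \ref{soriten}(e) forces $a_nf \in B$, whence $f \in \Quot(B)$. For (i) $\To$ (iii), pick finitely many elements of $B$ whose fractions already generate $\R(V)$ over $\R$, and let $W = \Spec$ of the $\R$-subalgebra they span; the inclusion in $\R[V]$ yields a birational morphism $V \to W$ that is bounded on $S$. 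For (iii) $\To$ (iv), a birational morphism is an isomorphism over a dense open $W_0 \subset W$, so $S_y$ is at most a singleton for $y \in W_0(\R)$ and $\Omega_\varphi \subset W \smallsetminus W_0$. For (v) $\To$ (iv), take $\varphi := f\colon V \to \A^1$: it is dominant, bounded on $S$, and for $c \ne 0$, $\ol{S \cap f^{-1}(c)}$ sits inside the compact set $\ol S \cap f^{-1}(c)$, whence $\Omega_\varphi \subset \{0\}$. Finally, (iv) $\To$ (i) follows from Proposition \ref{omegaprop} applied to each $g \in \R[V]$.

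For (iv) $\To$ (v), I may assume $\ol S$ is non-compact (otherwise Lemma \ref{soriten}(b) gives $B = \R[V]$ and any non-constant regular function works). Fix generators $x_1, \dots, x_n$ of $\R[V]$, and for each $i$ set $h_i := 1$, $N_i := 0$ if $x_i \in B$; otherwise invoke the construction in the proof of Proposition \ref{omegaprop} to produce $h_i \in \R[W] \smallsetminus \{0\}$ and $N_i \ge 1$ with $|x_i \cdot (h_i \comp \varphi)^{N_i}| \le C_i$ on $S$. The key observation is that whenever $x_i \notin B$, $h_i$ vanishes at some point of the compact set $\ol{\varphi(S)}$ --- otherwise $|h_i|$ would be bounded below there by a positive constant and $x_i$ would already be bounded on $S$ --- so $h_i$ cannot be a unit in $\R[W]$. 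Setting $H := \prod_i h_i$ and $N := \max_i N_i$, the non-compactness of $\ol S$ forces some $x_i \notin B$, contributing a non-unit factor to $H$; were $H$ a nonzero constant $\alpha$, then that $h_i$ would divide $\alpha$ in $\R[W]$ and hence be a unit, a contradiction, so $H$ is non-constant. Dominance of $\varphi$ then makes $f := H \comp \varphi \in B$ non-constant.

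Each $h_j \comp \varphi$ is bounded on $\ol S$ by some $B_j$ (by compactness of $\ol{\varphi(S)}$ and continuity), and combining the individual estimates yields
\[ \bigl| x_i \cdot (H \comp \varphi)^N \bigr| \;\le\; C_i\,B_i^{N-N_i} \prod_{j \ne i} B_j^N \;=:\; C'_i \qquad \text{on } S, \]
which extends to $\ol S$ by continuity. For $c \ne 0$ and $x \in \ol S \cap f^{-1}(c)$, the identity $H(\varphi(x))^N = c^N$ then yields $|x_i(x)| \le C'_i/|c|^N$ for every $i$, so $\ol S \cap f^{-1}(c)$ is a closed and bounded subset of $V(\R) \hookrightarrow \R^n$, hence compact. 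The main obstacle is securing the non-constancy of $f$: on varieties $W$ admitting non-constant units the product $H$ could a priori collapse to a nonzero constant, and the non-unit analysis of the $h_i$ is exactly what rules this out.
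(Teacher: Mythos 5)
Your proof is correct, but it routes the one substantive implication differently from the paper. The paper closes the cycle as (i) $\To$ (iii) $\To$ (ii) $\To$ (i) together with (iii) $\To$ (v) $\To$ (iv) $\To$ (i), and proves (iii) $\To$ (v) by exploiting birationality: it picks a proper closed $D\subset W$ off which $\varphi$ is an isomorphism and a non-constant $g\in\R[W]$ vanishing on $D$ and on the boundary of $\varphi(S)$ in $W(\R)$, so that for $c\ne0$ the fibre $g^{-1}(c)\cap\ol{\varphi(S)}$ is compact, avoids $D$, and pulls back to a compact set. You instead prove the stronger implication (iv) $\To$ (v) directly, using only dominance and the non-density of $\Omega_\varphi$: you run the construction from the proof of Proposition \ref{omegaprop} on each generator $x_i$ of $\R[V]$, multiply the resulting denominators $h_i$ into a single $H\in\R[W]$, and observe that each $h_i$ attached to an unbounded $x_i$ must vanish somewhere on the compact set $\ol{\varphi(S)}$, hence is a non-unit, so $H$ cannot be a nonzero constant and $f=H\comp\varphi$ is non-constant; the uniform estimate $|x_i\cdot(H\comp\varphi)^N|\le C_i'$ then bounds all coordinates on $f^{-1}(c)\cap\ol S$ for $c\ne0$. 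Your route buys a self-contained proof of (iv) $\To$ (v) (which the paper obtains only by going around the whole cycle) at the cost of some bookkeeping, while the paper's argument is shorter but genuinely needs the birational hypothesis. The peripheral implications match the paper's. One small loose end: you announce (i) $\iff$ (ii) but only prove (ii) $\To$ (i); the converse is immediate since $\trdeg_\R B=\trdeg_\R\Quot(B)$ (or, as in the paper, via (iii) $\To$ (ii) from $\R[W]\into B$), so this is cosmetic rather than a real gap.
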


Note that condition (v) implies that there exists $\varphi\colon V\to
W$ as in (iv) with $W=\A^1$.

\begin{proof}
(i) $\To$ (iii):
Assume $\Quot(B)=\R(V)$. Choose a finitely generated $\R$-subalgebra
$C$ of $B$ with $\Quot(C)=\R(V)$, and put $W=\Spec(C)$. The morphism
$\varphi\colon V\to W$ induced by the inclusion $C\subset\R[V]$
satisfies condition (iii).

(iii) $\To$ (ii):
(iii) implies $\R[W]\into B$, which implies (ii).

(ii) $\To$ (i):
The field extension $\Quot(B)\subset\R(V)$ is algebraic, by (ii).
Therefore, given $f\in\R[V]$, there exists $0\ne t\in B$ such that
$tf$ is integral over $B$. Since $B$ is integrally closed in $\R[V]$
(Lemma \ref{soriten}(e)), this shows $tf\in B$, and hence $f\in\Quot
(B)$.

(iii) $\To$ (v):
For $\varphi\colon V\to W$ as in (iii), let $D$ be a proper closed
subvariety of $W$ such that the restriction $\varphi^{-1}
(W\setminus D)\to W\setminus D$ of $\varphi$ is an isomorphism.
Choose a non-constant function $g\in\R[W]$ which vanishes on $D$ and
on the boundary of $\varphi(S)$ in $W(\R)$. Let $0\ne c\in\R$. Then
$g^{-1}(c)\cap\varphi(S)=g^{-1}(c)\cap\ol{\varphi(S)}$ by the choice
of $g$, and this set is compact since $\ol{\varphi(S)}$ is compact.
Moreover, $g^{-1}(c)\cap\varphi(S)$ is contained in $(W\setminus D)
(\R)$, and so the preimage
$$\varphi^{-1}\bigl(g^{-1}(c)\cap\varphi(S)\bigr)\>=\>(g\comp\varphi)
^{-1}(c)\cap\varphi^{-1}(\varphi(S))$$
is compact as well. In particular, $(g\comp\varphi)^{-1}(c)\cap\ol S$
is compact, and so it suffices to take $f:=g\comp\varphi$.

(v) $\To$ (iv)
is trivial (we can take $W=\A^1$ and $\varphi:=f$).

(iv) $\To$ (i):
Let $\varphi$ be as in (iv). Given any $f\in\R[V]$, the set $\Omega_
\varphi(f)$ is not Zariski dense in $W$. So Prop.\ \ref{omegaprop}
shows $f\in\Quot(B)$.
\end{proof}

Under suitable conditions, we know a~priori that the $\R$-algebra $B$
is finitely generated (see Thm.\ \ref{bvsfg} below). In these cases
we can formulate Proposition \ref{omegaprop} and Theorem \ref{4conds}
more succinctly:

\begin{cor}
Assume that the $\R$-algebra $B=B_V(S)$ is finitely generated, put
$W=\Spec(B)$, and let $\varphi\colon V\to W$ be the canonical
morphism.
\begin{itemize}
\item[(a)]
$f\in\R[V]$ lies in $\Quot(B)$ if and only if $\Omega_\varphi(f)$ is
not Zariski dense in $W$.
\item[(b)]
$\varphi$ is birational if and only if $\dim(W)=\dim(V)$, if and only
if the set $\Omega_\varphi$ is not Zariski dense in $W(\R)$.
\qed
\end{itemize}
\end{cor}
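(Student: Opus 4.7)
The overall plan is to reduce both parts to Proposition \ref{omegaprop} and Theorem \ref{4conds} by using the universal property of $W=\Spec(B)$. First I would record the basic setup: since $B$ is a domain, the inclusion $B\into\R[V]$ corresponds to a dominant morphism $\varphi\colon V\to W$, and $\varphi$ is bounded on $S$ because a finite set of generators of $B$ over $\R$ consists, by definition, of polynomials bounded on $S$.

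\medskip
\textbf{Part (a).}
The direction ``$\Omega_\varphi(f)$ not Zariski dense $\To$ $f\in\Quot(B)$'' is immediate from Prop.~\ref{omegaprop} applied to our specific $\varphi$. For the converse, let $f\in\Quot(B)$. Prop.~\ref{omegaprop} furnishes \emph{some} dominant morphism $\psi\colon V\to W'$ of affine $\R$-varieties which is bounded on $S$ and such that $\Omega_\psi(f)\subset Z(h)$ for some $0\ne h\in\R[W']$. Since $\psi$ is bounded on $S$, the ring map $\psi^*\colon\R[W']\to\R[V]$ lands in $B$, so it factors through $B$ and hence induces a (dominant) morphism $\pi\colon W\to W'$ with $\psi=\pi\comp\varphi$. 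For $y\in\Omega_\varphi(f)$ we have $S_y\subset S_{\pi(y)}$ (fibres of $\psi$ contain fibres of $\varphi$ above), so $f$ is unbounded on $S_{\pi(y)}$ as well, i.e., $\pi(y)\in\Omega_\psi(f)$. Hence
$$\Omega_\varphi(f)\>\subset\>\pi^{-1}\bigl(\Omega_\psi(f)\bigr)\>\subset\>\pi^{-1}\bigl(Z(h)\bigr)\>=\>Z(h\comp\pi).$$
As $\pi$ is dominant, $h\comp\pi\ne0$ in $\R[W]$, so $\Omega_\varphi(f)$ is not Zariski dense in $W$.

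\medskip
\textbf{Part (b).}
Since $B$ is a finitely generated $\R$-domain we have $\dim(W)=\trdeg_\R(B)$. Combined with Theorem \ref{4conds}\,(i)$\iff$(ii) this gives
$$\varphi\text{ birational}\iff\Quot(B)=\R(V)\iff\trdeg_\R(B)=\dim(V)\iff\dim(W)=\dim(V).$$
(Here the step ``$\dim(W)=\dim(V)\Rightarrow\varphi$ birational'' uses the integral closedness of $B$ in $\R[V]$ as in the proof of (ii)$\To$(i) of Thm.~\ref{4conds}.) For the final equivalence, if $\Omega_\varphi$ is not Zariski dense in $W$ then $\varphi$ itself witnesses condition (iv) of Thm.~\ref{4conds}, giving (i) and thus birationality. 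Conversely, if $\varphi$ is birational then by part (a) applied to a set of algebra generators $x_1,\dots,x_n$ of $\R[V]$, each $\Omega_\varphi(x_i)$ is contained in the zero set of some nonzero $h_i\in\R[W]$; since $W$ is irreducible,
$$\Omega_\varphi\>=\>\bigcup_{i=1}^n\Omega_\varphi(x_i)\>\subset\>Z(h_1\cdots h_n)$$
is not Zariski dense in $W$.

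\medskip
The main point requiring care is the factorization argument in part (a): one must observe that $\psi^*(\R[W'])\subset B$, that the induced $\pi$ is automatically dominant, and — crucially — that fibres of $\varphi$ sit inside fibres of $\psi$, so the $\Omega$-sets are related by $\pi(\Omega_\varphi(f))\subset\Omega_\psi(f)$. Everything else is a direct translation of the earlier results.
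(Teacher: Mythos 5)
Your proof is correct, and it is essentially the derivation the paper intends: the corollary is stated with no proof as an immediate specialization of Prop.~\ref{omegaprop} and Thm.~\ref{4conds}, and you supply exactly the missing details (boundedness and dominance of the canonical $\varphi$, the transfer from ``some'' bounded dominant morphism to the canonical one, and $\dim(W)=\trdeg_\R(B)$). The only remark worth making is that your factorization argument in part~(a), while valid, can be shortcut: in the proof of Prop.~\ref{omegaprop}, (i)$\To$(ii), the subalgebra $C$ may be taken to be $B$ itself once $B$ is finitely generated, which directly yields $\Omega_\varphi(f)\subset Z(h)$ for the canonical $\varphi$; your version has the merit of using only the \emph{statement} of Prop.~\ref{omegaprop} rather than its proof.
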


\begin{rem}
Even if $B=B_V(S)$ fails to be finitely generated, we can
characterize $\trdeg_\R(B)$ as the maximum dimension of an affine
$\R$-variety $W$ for which there exists a dominant morphism $V\to W$
which is bounded on $S$.
\end{rem}

\begin{examples}
\hfil

1.\
Let $V=\A^2$, and consider the set $S=\{(x,y)\colon0\le x(x^2+y^2)\le
1\}$ in $\R^2$. Then $y\notin B(S)$ but $x$, $xy\in B(S)$, so $y\in
\Quot(B)$. In fact, $B=\R[x,xy,xy^2]$.
If $\varphi\colon V\to W=\Spec(B)$ denotes the canonical map then
$\Omega_\varphi=\Omega_\varphi(y)$ consists only of the origin in
$W$.

2.\
Let $f$, $g\in\R[x,y]$ be two algebraically independent polynomials,
and let $S=\{p\in\R^2\colon|f(p)|\le1$, $|f(p)g(p)|\le1\}$. Clearly
$f$, $fg\in B(S)$, hence $\trdeg B(S)=2$. From Theorem \ref{4conds}
it follows that there exists a non-constant polynomial $h\in B(S)$
such that all fibres $h^{-1}(c)\cap S$ are compact for $c\ne0$.
Depending on the choice of $f$ and $g$, it may not be \emph{a priori}
clear how to find such $h$.
For a concrete example, take $f=x^2y$ and $g=y^2$. Here, neither $f$
nor $fg$ have any compact fibres, but (for example) $h=xy$ will do.

3.\
The same example also shows another phenomenon. If $\trdeg B(S)=2$,
there exists a birational morphism $\varphi\colon\A^2\to W$ of affine
varieties that is bounded on $S$. In many instances, the map $\A^2\to
\A^2$, $(x,y)\mapsto((f(x),f(y)g(y))$ will not have this property,
implying in particular that $B(S)$ is strictly larger than
$\R[f,fg]$. Finding a birational morphism $\varphi$ as above can be
seen as a first step towards determining $B(S)$. (For $f$ and $g$ as
in the previous example, it is easy to see that $B(S)=\R[xy,x^2y,
x^2y^3]$, while the map $(x,y)\mapsto(f(x),f(y)g(y))$ has generically
degree four.) There does not seem to be any general procedure that
will produce such $\varphi$. See however Remark \ref{excompl}.3.
\end{examples}

\begin{lab}
Throughout this paper, we shall assume for the most part that
varieties are irreducible. Here are a few remarks on the reducible
case. Let $V$ be an affine $\R$-variety with irreducible components
$V_1,\dots,V_r$, let $S$ be a \sa\ subset of $V(\R)$, and write $S_i
:=S\cap V_i(\R)$. The relation between $B_V(S)$ and the rings
$B_{V_i}(S_i)$ ($i=1,\dots,r$) depends largely on the way the
components $V_i$ of $V$ meet. Clearly, the restriction $\R[V]\to
\R[V_i]$ maps $B_V(S)$ to $B_{V_i}(S_i)$ ($i=1,\dots,r$). But $B_V(S)
\to B_{V_i}(S_i)$ need not be surjective, as the following example
shows. Let $V$ be the plane affine curve $x(x^2+y^2-1)=0$, which is
the union of a circle $V_1$ and a line $V_2$, and take $S=V(\R)$.
Then $B_V(S)$ consists of those polynomials which are constant on the
line, and so the restriction map $B_V(S)\to B_{V_1}(S_1)$ to the
circle does not contain $y|_{V_1}$ in its image.

Nevertheless, in the case when $V$ is a curve, the relation between
$B_V(S)$ and the $B_{V_i}(S_i)$ is understood farily well, see
\cite{Pl} for details.
Things become considerably more difficult in dimension at least two.
For instance, while $B_V(S)=\R$ implies $B_{V_i}(S_i)=\R$ for all $i$
when $\dim(V)=1$ (\cite{Pl}, Lemma 1.12\,(5)), this is false in
higher dimensions. For an example, let $V$ be the union of two planes
in $3$-space,
and let $S$ be the union of the first plane $V_1(\R)$ with a strip
$[0,1]\times\R$ in the second plane, where the strip is transversal
to the line $V_1\cap V_2$.
Then $B_V(S)=\R\ne B_{V_2}(S_2)$.
\end{lab}

%-------------------------------------------------------------------%

\section{Bounded polynomials and completions of varieties}
\label{goco}

\begin{lab}
Let $X$ be a normal $\R$-variety. A \emph{prime divisor} on $X$ will
be a closed irreducible subset of codimension one in $X$. By a
\emph{divisor} on $X$ we always mean a Weil divisor, that is, an
element of the free abelian group generated by the prime divisors on
$X$. Linear equivalence of divisors is denoted~$\sim$. If $Z$ is a
prime divisor, the discrete valuation of $\R(X)$ associated with $Z$
will be denoted by $v_Z$. Thus $v_Z(f)$ is the vanishing order of $f$
along $Z$, for $f\in\R(X)^*$. Recall that the prime divisor $Z$ is
said to be real if its residue field $\R(Z)$ can be ordered
(c.f.~\ref{rvar}).

Given a rational map $f\colon V\ratto W$ between irreducible
varieties, we denote by $\dom(f)$ the largest open subset of $V$ on
which $f$ is defined.
\end{lab}

\begin{lab}
Let $X$ be an $\R$-variety. Sometimes it will be convenient to work
in the real spectrum $X_r$ of $X$. When $X$ is affine, $X_r=\Sper
\R[X]$ is the space of all orderings of the ring $\R[X]$ (see
\cite{BCR} \S\,7). When $X$ is not necessarily affine, the
topological space $X_r$ is defined by glueing the real spectra
$(U_i)_r$ ($i=1,\dots,r$) of an open affine cover $X=U_1\cup\cdots
\cup U_r$, see \cite{sch:sln} 0.4 for more details. Thus a point
$\alpha\in X_r$ corresponds to a pair $(x_\alpha,P_\alpha)$ where
$x_\alpha$ is a (scheme-theoretic) point of $X$ and $P_\alpha$ is an
ordering of the residue class field of $X$ in $x_\alpha$. The support
of $\alpha$ is $\supp(\alpha)=\ol{\{x_\alpha\}}$. In particular,
$X(\R)$ is a topological subspace of $X_r$ in the obvious way.
Similarly, when $X$ is irreducible, the space $\Sper\R(X)$ of all
orderings of the function field of $X$ is identified with the
subspace $\{\alpha\in X_r\colon\supp(\alpha)=X\}$ of~$X_r$.

The topological space $X_r$ is spectral, and hence there is a
well-defined notion of constructible subsets of $X_r$ (see
\cite{sch:sln} 0.4). For every \sa\ subset $S$ of $X(\R)$, there
exists a unique constructible subset $\wt S$ of $X_r$ such that $S=
\wt S\cap X(\R)$. If $X$ is affine, then $\wt S$ is the subset of
$X_r$ defined by the same system of inequalities as $S$. It is
well-known that $\wt S$ is open (resp.\ closed) in $X_r$ if and only
if the same is true of $S$ in $X(\R)$. For two points $\alpha$,
$\beta\in X_r$, we say that $\alpha$ specializes to $\beta$, denoted
$\alpha\spez\beta$, if $\beta$ is contained in the closure of
$\alpha$.
\end{lab}

Recall that a valuation $v$ of a field $K$ is called
\emph{compatible} with an ordering $\le$ of $K$ if $0<b\le a$, for
$a$, $b\in K^*$, implies $v(b)\ge v(a)$. The usefulness of the real
spectrum in our context comes from the following lemma:

\begin{lemdfn}\label{dfncomp}
Let $X$ be a normal $\R$-variety, let $Z$ be a prime divisor on $X$,
and let $S$ be a \sa\ subset of $X(\R)$. The following conditions are
equivalent:
\begin{itemize}
\item[(i)]
$Z(\R)\cap\ol{(S\cap U(\R))}$ is Zariski dense in $Z$, where $U=X
\setminus Z$;
\item[(ii)]
there is a specialization $\alpha\spez\beta$ in $X_r$ with $\alpha\in
\wt S$, $\supp(\beta)=Z$ and $\alpha\ne\beta$;
\item[(iii)]
the discrete valuation $v_Z$ of $\R(X)$ is compatible with some
ordering in $\wt S\cap\Sper\R(X)$.
\end{itemize}
If these conditions hold we say that $Z$ and $S$ are
\emph{compatible}.
\end{lemdfn}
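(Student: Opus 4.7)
My plan is to reduce at once to the case $X$ affine, since all three conditions are local around the generic point of $Z$. Setting $U=X\setminus Z$ and $S':=S\cap U(\R)$, I have $\wt{S'}=\wt S\cap U_r$ as constructible subsets of $X_r$, and I will proceed by showing (ii)$\iff$(iii) and (i)$\iff$(ii) separately.

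For (ii)$\iff$(iii) I would invoke the standard dictionary between specializations in the real spectrum and compatible valuations on the function field. If $\alpha\spez\beta$ is a non-trivial specialization as in (ii), then $\supp(\alpha)$ is a scheme-theoretic generization of the generic point of $Z$. Since $X$ is normal and $Z$ has codimension one, the only generizations are $Z$ itself and the generic point of $X$; the former combined with $\supp(\beta)=Z$ and $\alpha\ne\beta$ is ruled out because the real spectrum of a field has no non-trivial specializations. Hence $\alpha\in\Sper\R(X)$, and the existence of the specialization $\alpha\spez\beta$ with $\supp(\beta)=Z$ translates exactly into the compatibility of $v_Z$ with $\alpha$, with $\beta$ an induced push-down ordering on the residue field $\R(Z)$. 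The reverse implication is the same dictionary read backwards.

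For (i)$\iff$(ii) I would use two standard facts from real-spectrum theory: (a) since $\ol{S'}$ is closed, $\wt{\ol{S'}}$ coincides with the spectral closure of $\wt{S'}$ in $X_r$, and (b) the restriction identity $\wt{\ol{S'}}\cap Z_r=\wt{\ol{S'}\cap Z(\R)}$. For the direction (i)$\To$(ii), Zariski density of $T:=Z(\R)\cap\ol{S'}$ in $Z$ forces the constructible set $\wt T\subset Z_r$ to contain a point $\beta$ with $\supp(\beta)$ equal to the generic point of $Z$, since otherwise the supports of $\wt T$ would lie in a proper closed subvariety of $Z$ and $T$ itself would be forced into its real points. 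Facts (a) and (b) then exhibit $\beta$ as a specialization of some $\alpha\in\wt{S'}\subset\wt S$, with $\alpha\ne\beta$ since $\supp(\alpha)\in U$ while $\supp(\beta)=Z$. For the converse, given (ii) the previous paragraph identifies $\alpha$ as an element of $\wt S\cap\Sper\R(X)\subset\wt{S'}$; hence $\beta\in\wt{\ol{S'}}\cap Z_r=\wt{\ol{S'}\cap Z(\R)}$ has support the generic point of $Z$, so $\ol{S'}\cap Z(\R)$ cannot lie in any proper closed subvariety of $Z$, yielding (i). The main subtlety I expect is invoking (a) and (b) cleanly, together with normality of $X$ being essential in forcing the specialization in (ii) to correspond to the divisorial valuation $v_Z$ via the DVR structure of $\scrO_{X,Z}$.
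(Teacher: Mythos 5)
Your proposal is correct and takes essentially the same route as the paper's own (very terse) proof, which rests on exactly the facts you spell out: $T:=Z(\R)\cap\ol{S\cap U(\R)}$ is Zariski dense in $Z$ iff $\wt T$ contains a point supported at the generic point of $Z$, such a point is a specialization of a point of $\wt S$ because the closure of a constructible set in the spectral space $X_r$ consists of the specializations of its points, and (ii)$\iff$(iii) is the standard ordering--valuation dictionary. The details you add --- normality and codimension one forcing $\alpha$ into $\Sper\R(X)$, and the identities $\wt{\ol{S'}}=\ol{\wt{S'}}$ and $\wt{\ol{S'}}\cap Z_r=\wt{\ol{S'}\cap Z(\R)}$ --- are precisely what the paper leaves implicit.
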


Note that every prime divisor which is compatible with $S$ has a real
residue field.

\begin{proof}
Let $T:=Z(\R)\cap\ol{(S\cap U(\R))}$, a closed \sa\ subset of
$Z(\R)$. The set $T$ is Zariski dense in $Z$ if and only if $\wt T$
contains a point with support $Z$. The latter condition is equivalent
to (ii), and (iii) is merely a reformulation of (ii).
\end{proof}

The following lemma is obvious (see \cite{Sch:TAMS} Lemma 0.2) and
will be used frequently:

\begin{lem}
Let $V$ be a connected normal $\R$-variety, let $S\subset V(\R)$ be a
\sa\ set, and let $Z$ be a prime divisor in $V$ which is compatible
with $S$. Let $f_1,\dots,f_r\in\R(V)$ satisfy $f_i\ge0$ on $S\cap\dom
(f)$. Then
$$v_Z\Bigl(\sum_if_i\Bigr)\>=\>\min_iv_Z(f_i).\eqno\square$$
\end{lem}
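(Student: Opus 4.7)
The plan is to pass to the real spectrum, invoke characterization (iii) of compatibility to produce an ordering of $\R(V)$ at which all $f_i$ are non-negative, and then deduce the claim from the interplay between that ordering and the valuation $v_Z$.

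Concretely, I would first use Lemma \ref{dfncomp}(iii) to choose an ordering $P\in\wt S\cap\Sper\R(V)$ such that $v_Z$ is compatible with $P$ in the usual sense: $0<_P b\le_P a$ forces $v_Z(b)\ge v_Z(a)$. The next step is to verify that $f_i\ge_P 0$ for each $i$. Here I would use that $P$ has support equal to the generic point of $V$, hence lies in the open subset $\dom(f_i)_r$ of $V_r$, and that for the open immersion $\dom(f_i)\subset V$ one has the equality $\wt S\cap\dom(f_i)_r=(S\cap\dom(f_i))^{\sim}$. Applying the tilde operator to the inclusion $S\cap\dom(f_i)\subset\{f_i\ge 0\}$ then yields $f_i\ge_\alpha 0$ for every $\alpha\in\wt S\cap\dom(f_i)_r$, in particular for $\alpha=P$.

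With positivity at $P$ secured, the proof finishes quickly. Set $k:=\min_i v_Z(f_i)$ and fix $j$ with $v_Z(f_j)=k$; since $f_j\ne 0$ and $f_j\ge_P 0$, in fact $f_j>_P 0$. Writing $g:=\sum_i f_i$, the hypothesis gives $g\ge_P f_j>_P 0$, so compatibility of $v_Z$ with $P$ delivers the key inequality $v_Z(g)\le v_Z(f_j)=k$. Combined with the ultrametric inequality $v_Z(g)\ge\min_i v_Z(f_i)=k$, which holds for any valuation on a sum, this forces $v_Z(g)=k$. The only non-routine point is the transfer of the inequality $f_i\ge 0$ from the classical points of $S\cap\dom(f_i)$ to the single generic ordering $P$, which is precisely why the real spectrum has to be brought in; everything else is a formal manipulation with the compatibility condition.
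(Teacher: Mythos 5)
Your proof is correct, and it is exactly the intended argument: the paper gives no proof of this lemma, declaring it ``obvious'' and citing \cite{Sch:TAMS}, Lemma 0.2, and the standard verification there proceeds just as you do --- pick an ordering $P\in\wt S\cap\Sper\R(V)$ compatible with $v_Z$ via \ref{dfncomp}(iii), transfer the pointwise inequalities $f_i\ge0$ to $P$ by the tilde correspondence, and combine $0<_P f_j\le_P\sum_i f_i$ with the ultrametric inequality. Your handling of the one genuinely non-formal step (passing from positivity on the classical points of $S\cap\dom(f_i)$ to positivity at the generic ordering $P$) is the right one, so there is nothing to add.
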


\begin{lem}\label{bddfctcompat}
Let $V$ be a normal $\R$-variety, let $S\subset V(\R)$ be a \sa\ set,
and let $Z$ be a prime divisor in $V$ which is compatible with $S$.
If a rational function $f\in\R(V)^*$ has a pole along $Z$, then $f$
is unbounded on $S\cap\dom(f)$.
\end{lem}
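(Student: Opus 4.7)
The plan is to argue by contradiction, using Lemma~\ref{dfncomp}(iii) to recast analytic boundedness of $f$ on $S$ as an algebraic inequality inside the ordered function field $\R(V)$. Suppose that $|f|\le N$ on $S\cap\dom(f)$ for some real $N>0$; the goal is to force $v_Z(f)\ge0$, contradicting the pole hypothesis.

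First I would invoke Lemma~\ref{dfncomp}(iii) to fix an ordering $P\in\wt S\cap\Sper\R(V)$ with which $v_Z$ is compatible. Writing $f=g/h$ with $g,h\in\R[V]$, $h\ne0$, the boundedness assumption yields the polynomial inequality $N^2h^2-g^2\ge0$ on the closed \sa\ subset $S\cap\{h\ne0\}\subset S\cap\dom(f)$. Since the support of $P$ is the generic point of $V$, one has $h\ne0$ at $P$, so $P$ lies in $\wt{S\cap\{h\ne0\}}$. Using the standard fact that any polynomial inequality valid on a closed \sa\ set $T\subset V(\R)$ persists at every point of $\wt T\subset V_r$, I obtain $g^2\le_P N^2h^2$, and dividing by $h^2>_P0$ gives $f^2\le_P N^2$ in the ordered field $(\R(V),P)$. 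Now apply the valuation-ordering compatibility: assuming $f\ne0$ (otherwise there is nothing to show), the chain $0<_P f^2\le_P N^2$ together with compatibility forces $v_Z(f^2)\ge v_Z(N^2)=0$, hence $v_Z(f)\ge0$, the desired contradiction.

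The delicate step is the transfer in the middle, where the metric bound $|f|\le N$ on the \sa\ subset $S\cap\dom(f)\subset V(\R)$ is upgraded to the algebraic bound $f^2\le_P N^2$ at the chosen ordering $P\in\Sper\R(V)$. This is precisely what the real spectrum formalism is designed to handle; once that translation is secured, the compatibility of $v_Z$ with $P$ finishes the argument in a single line.
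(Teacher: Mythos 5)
Your argument is correct and is essentially the paper's own proof, with the intermediate step spelled out: the paper fixes the same ordering $\alpha\in\wt S\cap\Sper\R(V)$ compatible with $v_Z$ and phrases your inequality $f^2\le_P N^2$ as ``$f$ lies in the $\alpha$-convex hull of $\R$'', then concludes $f\in\scrO_{V,Z}$ from convexity of that valuation ring, which is exactly your final compatibility step. (Two cosmetic points: the transfer of $N^2h^2-g^2\ge0$ from a \sa\ set $T$ to $\wt T$ needs no closedness of $T$, which is just as well since $S\cap\{h\ne0\}$ need not be closed; and since $V$ need not be affine one should first pass to an affine open meeting $Z$ before writing $f=g/h$.)
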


\begin{proof}
The compatibility of $Z$ with $S$ means that there exists $\alpha\in
\wt S\cap\Sper\R(V)$ which makes the discrete valuation ring $\scrO_
{V,Z}$ convex in $\R(V)$. Assume that $f$ is bounded on $S':=S\cap
\dom(f)$. Since $\wt S'$ has the same trace in $\Sper\R(V)$ as
$\wt S$, we have $\alpha\in\wt S'$ as well. On the other hand, $f$
bounded on $S'$ implies that $f$ lies in the $\alpha$-convex hull of
$\R$ in $\R(V)$. In particular, $f\in\scrO_{V,Z}$, which means that
$f$ does not have a pole along $Z$.
\end{proof}

\begin{dfn}\label{dfngoodcompl}
Let $V$ be an irreducible $\R$-variety and let $S$ be a \sa\ subset
of $V(\R)$. By a \emph{completion} of $V$ we mean an open dense
embedding $V\into X$ into a complete $\R$-variety. The completion $X$
will be said to be \emph{compatible with $S$} (or
\emph{$S$-compatible}) if for every irreducible component $Z$ of
$X\setminus V$ the following conditions hold:
\begin{itemize}
\item[(1)]
The local ring $\scrO_{X,Z}$ is a discrete valuation ring;
\item[(2)]
the set $Z(\R)\cap\ol S$ is either empty or Zariski dense in $Z$.
\end{itemize}
(Here, of course, $\ol S$ denotes the closure of $S$ in $X(\R)$.)
\end{dfn}

Note that (1) is saying that $Z$ has codimension one in $X$ and is
not contained in the singular locus of $X$. Condition (2) says (for
normal $X$) that every irreducible component of $X\setminus V$ is
either compatible with $S$ or disjoint from $\ol S$ (c.f.\ Definition
\ref{dfncomp}).

\begin{examples}\label{excompl}
\hfil

1.\
Given a \sa\ subset $S$ of $\R^n$, the natural completion $\A^n
\subset\P^n$ of affine $n$-space is compatible with $S$ if and only
if $S$ contains an open cone in $\R^n$ (not necessarily centered at
the origin).

2.\
The case of curves is simple: Given a (possibly singular) irreducible
curve $C$, there is a unique projective completion $C\into X$ for
which $X_\sing\subset C_\sing$ (see \cite{Sch:MZ} 4.6). The
completion $X$ is compatible with any \sa\ subset $S$ of $C(\R)$. The
points of $X\setminus C$ are called the \emph{points of $C$ at
infinity}.

3.\
There are interesting classes of \sa\ sets $S$ (in $\R^n$, say) for
which $S$-compatible completions (of $V=\A^n$, in this case) can be
constructed as toric varieties. For example, when $S$ is defined by
(finitely many) binomial inequalities $ax^\alpha<bx^\beta$, this
always is the case. For such $S$, the ring $B(S)$ can be identified
explicitly in terms of the defining inequalities, and $B(S)$ is
always finitely generated as an $\R$-algebra. We plan to
expand on this remark in a future publication.
\end{examples}

Our interest in compatible completions arises from the next result.
It shows that such a completion calculates the ring of bounded
polynomials on a \sa\ set:

\begin{thm}\label{gococalxbs}
Let $V$ be an affine normal $\R$-variety, let $S\subset V(\R)$ be a
\sa\ subset, and assume that the completion $V\into X$ of $V$ is
compatible with $S$. Let $Y$ denote the union of those irreducible
components $Z$ of $X\setminus V$ for which $\ol S\cap Z(\R)=
\emptyset$, and put $U=X\setminus Y$. Then the inclusion $V\subset U$
induces a ring isomorphism
$$\scrO_X(U)\>\isoto\>B_V(S).$$
\end{thm}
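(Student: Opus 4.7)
The plan is to verify three properties of the restriction map $r\colon\scrO_X(U)\to\R[V]$, $f\mapsto f|_V$: injectivity, that the image lies in $B_V(S)$, and surjectivity onto $B_V(S)$.

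Injectivity is immediate. Since $V$ is irreducible and dense in $X$, both $X$ and $U$ are irreducible, so any regular function on $U$ vanishing on the dense open $V$ is identically zero.

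For the image containment, let $f\in\scrO_X(U)$. Completeness of $X$ implies $X(\R)$ is compact in the Euclidean topology, hence the closure $\ol S$ of $S$ in $X(\R)$ is compact. By construction of $Y$, each of its components is disjoint from $\ol S$, so $\ol S\cap Y(\R)=\emptyset$ and $\ol S\subset U(\R)$. Since $f$ is continuous on $U(\R)$, it is bounded on the compact set $\ol S\supset S$, giving $f|_V\in B_V(S)$.

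For surjectivity, let $f\in B_V(S)$, viewed as a rational function on $X$. Let $Z_1,\dots,Z_k$ be the irreducible components of $U\setminus V$. Each $Z_i$ is a component of $X\setminus V$ not lying in $Y$, so by the $S$-compatibility of the completion, $Z_i(\R)\cap\ol S$ is Zariski dense in $Z_i$; by Lemma-Definition~\ref{dfncomp}, $Z_i$ is $S$-compatible. Lemma~\ref{bddfctcompat} then gives $f\in\scrO_{X,Z_i}$ for each $i$. Combined with $f\in\R[V]=\bigcap_{Z\subset V}\scrO_{V,Z}$ (using normality of $V$), $f$ lies in the stalk $\scrO_{X,Z}$ for every prime divisor $Z$ of $U$.

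The main obstacle is to upgrade ``regular in codimension one on $U$'' to ``regular on $U$''. When $X$ is normal this is immediate from the identity $\scrO_U(U)=\bigcap_{Z}\scrO_{X,Z}$ taken over prime divisors $Z$ of $U$. In general, condition~(1) of Definition~\ref{dfngoodcompl} combined with the normality of $V$ ensures $X$ is regular in codimension one, and one can reduce to the normal case by passing to the normalization $\wt X\to X$: since each $\scrO_{X,Z_i}$ is already a DVR, this morphism is an isomorphism over $V$ and over a Zariski neighborhood of the generic point of each $Z_i$, so the $S$-compatible completion structure transfers to $\wt X$ without altering the ring $\scrO_U(U)$ under consideration.
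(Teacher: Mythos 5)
Your proof is essentially the paper's own: the forward inclusion via compactness of $\ol S$ inside $U(\R)$, and surjectivity by checking $v_{Z_i}(f)\ge 0$ on the components $Z_i$ of $U\setminus V$ using compatibility (Lemma--Definition~\ref{dfncomp}) together with Lemma~\ref{bddfctcompat}, then invoking normality to pass from ``regular in codimension one'' to ``regular''. The only divergence is your final paragraph. The paper simply asserts that $U$ is normal (the standing convention of Section~\ref{goco}, and the remark after Definition~\ref{dfngoodcompl}, indicate that $X$ is meant to be normal throughout, in which case $\scrO_U(U)=\bigcap_Z\scrO_{U,Z}$ is immediate and your argument is complete). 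Your attempt to cover non-normal $X$ by passing to the normalization $\nu\colon\wt X\to X$ is the one shaky step: it is true that $\nu$ is an isomorphism over $V$ and over a neighbourhood of the generic point of each $Z_i$, but the assertion that this leaves $\scrO_U(U)$ unchanged is exactly what needs proof. Since $\nu$ is finite and birational one only gets $\scrO_U(U)\subseteq\scrO_{\wt U}(\wt U)$, and this inclusion can be strict when $U$ fails to be normal along a codimension-two subset of some $Z_i$; in that case your argument only identifies $B_V(S)$ with $\scrO_{\wt U}(\wt U)$, not with $\scrO_U(U)$. So either restrict to normal $X$ (as the paper implicitly does and as every application in the paper satisfies), or drop the normalization remark; as written it claims more than it establishes.
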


\begin{proof}
Again, $\ol S$ denotes the closure of $S$ in $X(\R)$. Since $\ol S$
is compact and contained in $U(\R)$, every element of $\scrO_X(U)$ is
bounded on $S$. So the image of the restriction map $\scrO_X(U)\into
\R[V]$ is contained in $B_V(S)$. It remains to show that every $f\in
B_V(S)$, considered as a rational function on $U$, is regular on $U$.
Since $U$ is normal, it suffices that $v_Z(f)\ge0$ for every
irreducible component $Z$ of $U\setminus V$. By the construction of
$U$, and since $X$ is compatible with $S$, the intersection $\ol S
\cap Z(\R)$ is Zariski dense in $Z$, which means that the divisor $Z$
is compatible with the set $S$ (see \ref{dfncomp}). Since $f$ is
regular on $V$ and bounded on $S$, Lemma \ref{bddfctcompat} implies
$v_Z(f)\ge0$.
\end{proof}

\begin{lab}\label{exgocobs1}
We illustrate the possible use of Thm.\ \ref{gococalxbs} by two
examples. We use homogeneous coordinates $(x_0:x_1:x_2)$ on $\P^2$,
and we identify $(x,y)\in\A^2$ with $(1:x:y)\in\P^2$. Let $L=\{x_0=
0\}$ be the line at infinity in $\P^2$.

Consider the strip $S=\{(x,y)\colon|x|\le1\}$ in $\R^2$. In $\P^2
(\R)$ we have $\ol S\cap L(\R)=\{P\}$ where $P=(0:0:1)$. To improve
this we consider the blowing-up $\pi_1\colon X_1\to\P^2$ at $P$. Then
$\A^2\into X_1$ is a completion of $\A^2$ for which $X_1\setminus
\A^2$ has two irreducible components, namely $L'$ (the strict
transform of $L$) and $E_1=\pi_1^{-1}(P)$. In $X_1(\R)$ we have
$\ol S\cap L'(\R)=\emptyset$, and $\ol S\cap E_1(\R)$ is Zariski
dense in $E_1$. Therefore the completion $X_1$ is compatible with
$S$, and by \ref{gococalxbs} we have $B(S)=\scrO(U)$ for $U:=X_1
\setminus L'$. Clearly $\scrO(U)=\{f\in\R[x,y]\colon v_{E_1}(f)
\ge0\}$. Calculating the valuation $v_{E_1}$ we find
$$v_{E_1}\Bigl(\sum_{i,j}a_{ij}x^iy^j\Bigr)\>=\>\min\{-j\colon a_{ij}
\ne0\}.$$
So we get $B(S)=\scrO(U)=\R[x]$.
\end{lab}

\begin{lab}\label{exgocobs2}
For another example let now $T=\{(x,y)\in\R^2\colon|x|\le1$, $|xy|\le
1\}$. As before we use the blowing-up $\pi_1\colon X_1\to\P^2$ of the
plane in $P=(0:0:1)$. Again we have $\ol T\cap L'(\R)=\emptyset$ in
$X_1(\R)$, but this time $\ol T\cap E_1(\R)=\{P_1\}$ is a singleton.
Therefore we blow up $X_1$ in $P_1$ to get $\pi_2\colon X_2\to X_1$,
with exceptional fibre $E_2=\pi_2^{-1}(P_1)$. Then in $X_2(\R)$ we
find $\ol T\cap E'_1(\R)=\emptyset$, and $\ol T\cap E_2(\R)$ is
Zariski dense in $E_2$. So $X_2$ is a completion of $\A^2$ which is
compatible with $T$, and $B(T)=\scrO(W)$ for $W:=X_2\setminus
(L''\cup E_1')$ according to Thm.\ \ref{gococalxbs}. We find $\scrO
(W)=\{f\in\R[x,y]\colon v_{E_2}(f)\ge0\}$ and
$$v_{E_2}\Bigl(\sum_{i,j}a_{ij}x^iy^j\Bigr)\>=\>\min\{i-j\colon
a_{ij}\ne0\},$$
which shows $B(T)=\scrO(W)=\R[x,xy]$.
\end{lab}

In general, if we want to apply Theorem \ref{gococalxbs} to
calculate $B_V(S)$, we can start with some (normal) completion
$V\into X_0$ of $V$. By making suitable iterated blowing-ups with
centers over $X_0\setminus V$, we try to ``straighten out'' the set
$S$ at infinity more and more. When $V$ is a surface and the set $S$
is sufficiently regular at infinity, this procedure will always
lead, after finitely many steps, to a completion of $V$ which is
compatible with $S$, see Theorem \ref{exgocoreginf} below.

%-------------------------------------------------------------------%

\section{Existence of compatible completions}\label{exgoco}

In view of Theorem \ref{gococalxbs}, we are now discussing the
existence question for $S$-compatible completions. We start with the
case $S=V(\R)$.

\begin{thm}\label{exgocovr}
Every connected non-singular $\R$-variety $V$ has a completion $X$
which is compatible with the set $S=V(\R)$, and such that the
irreducible components of $X\setminus V$ are non-singular. If $V$ is
quasi-projective then $X$ can be chosen to be projective.
\end{thm}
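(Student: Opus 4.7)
The plan is to construct $X$ in two steps: first compactify $V$ inside a complete $\R$-variety, then resolve singularities so that the boundary becomes a smooth divisor with simple normal crossings. Because $V$ itself is assumed non-singular, these two tools cooperate to put the boundary $X\setminus V$ in a form that allows a direct application of the real-algebra criterion of Lemma and Definition \ref{dfncomp}.

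I would begin by applying Nagata's compactification theorem to obtain an open dense embedding $V \hookrightarrow X_0$ with $X_0$ a complete $\R$-variety. In the quasi-projective case Nagata is superfluous: one takes $X_0$ to be the projective closure of $V$ under some immersion $V\hookrightarrow\P^n$. Then, by Hironaka's theorem on resolution of singularities in characteristic zero, there exists a proper birational morphism $\pi\colon X\to X_0$ with $X$ non-singular, $\pi$ an isomorphism over the smooth locus of $X_0$, and such that the reduced preimage of $X_0\setminus V$ is a simple normal crossings divisor on $X$. Since $V$ lies inside the smooth locus of $X_0$, the map $\pi$ restricts to an isomorphism above $V$, so I identify $V$ with an open dense subvariety of $X$, and $X\setminus V$ becomes a union of non-singular prime divisors meeting transversally. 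In the quasi-projective situation $\pi$ can be arranged as a composition of blow-ups along smooth centers contained in $X_0\setminus V$, hence $X$ remains projective.

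It then remains to check that $X$ is compatible with $S=V(\R)$ in the sense of Definition \ref{dfngoodcompl}. Condition (1) is immediate: every irreducible component $Z$ of $X\setminus V$ is a smooth prime divisor in a smooth variety, so $\scrO_{X,Z}$ is a discrete valuation ring. For condition (2), fix such a component $Z$. Since $Z$ is non-singular as a variety, every $\R$-point of $Z$ is a non-singular $\R$-point, so either $Z(\R)=\emptyset$, in which case condition (2) is trivial, or $Z$ carries a non-singular real point and is therefore real in the sense of \ref{rvar}, i.e.~$\R(Z)$ is formally real. In the latter case, the standard lifting theorem for orderings through a discrete valuation with formally real residue field produces an ordering $\alpha$ of $\R(X)=\R(V)$ with which $v_Z$ is compatible. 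The point $\alpha$ has support equal to the generic point of $V$, hence $\alpha\in\Sper\R(V)\subset V_r=\wt{V(\R)}$, and so Lemma and Definition \ref{dfncomp}(iii) shows that $Z$ is compatible with $V(\R)$. This is exactly condition (2).

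The only non-elementary ingredient is Hironaka's theorem, whose quasi-projective version preserves projectivity. Once the SNC resolution is in hand, the verification of both conditions in Definition \ref{dfngoodcompl} is an essentially formal consequence of \ref{dfncomp}, together with the elementary observation that a smooth irreducible $\R$-variety with a non-empty set of real points is real. Correspondingly, I expect no serious obstacle beyond correctly applying resolution of singularities; the delicate issues (non-real boundary components, specialisations in the real spectrum, etc.) are all neutralised by the SNC assumption.
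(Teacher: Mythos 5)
Your proof is correct and follows essentially the same route as the paper: compactify, apply resolution of singularities so that the boundary components become non-singular prime divisors, and then verify compatibility with $V(\R)$ via the Baer--Krull lifting of an ordering of the (formally real) residue field $\R(Z)$ through the discrete valuation $v_Z$, exactly as in \ref{dfncomp}(iii). The only quibble is your claim that the log resolution $\pi$ is an isomorphism over the whole smooth locus of $X_0$ --- it can only be taken to be one over the locus where the boundary is already a simple normal crossings divisor, which does include $V$ --- but since you only use the isomorphism over $V$, nothing in the argument is affected.
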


\begin{proof}
Start with any open dense embedding $V\into X$ into a complete
$\R$-variety $X$ (which we can take projective if $V$ is
quasi-projective). The singularities of $X$ are contained in
$X\setminus V$, and by resolving them we get $X$ non-singular.
Some irreducible components of $X\setminus V$ may
have codimension $\ge2$. This can be remedied by blowing up $X$ in
these centers.
So we can assume that every irreducible component of $X\setminus V$
has codimension one in $X$. Finally, by
embedded resolution of singularities, we can achieve that the
irreducible components of $X\setminus V$ are non-singular.

We claim that the completion $V\subset X$ is compatible with $V(\R)$.
To see this, fix an irreducible component $Z$ of $X\setminus V$ for
which $Z(\R)\ne\emptyset$. Since $Z$ is non-singular, the function
field $\R(Z)$ is real. By the Baer-Krull theorem, $\R(X)$ has an
ordering $\alpha$ which is compatible with the discrete valuation
$v_Z$. In particular, $Z$ is compatible with the set $V(\R)$
(Definition \ref{dfncomp}).
\end{proof}

We now turn to compatible completions for more general \sa\ subsets
$S$ of $V(\R)$. We shall denote the interior of a set $M\subset
V(\R)$ by $\inter(M)$.

\begin{dfn}\label{dfnreginfty}
Let $V$ be an irreducible $\R$-variety, let $S\subset V(\R)$ be a \sa\
set.
\begin{itemize}
\item[(a)]
The set $S$ is said to be \emph{regular} if $S\subset\ol{\inter\bigl(
S\cap V_\reg(\R)\bigr)}$.
\item[(b)]
$S$ is called \emph{regular at infinity} if $S=S_0\cup S_1$ where
$S_0$, $S_1$ are \sa\ sets with $\ol S_0$ compact and $S_1$ regular.
\item[(c)]
$S$ is called \emph{Zariski dense at infinity} if $S\setminus(K\cap
S)$ is Zariski dense in $V$ for every compact subset $K$ of $V(\R)$.
\end{itemize}
\end{dfn}

\begin{rem}
If $S$ is regular at infinity and $\ol S$ is not compact, then $S$ is
Zariski dense at infinity. Indeed, otherwise there would be a proper
Zariski closed subset $Z$ of $V$ and a compact set $K\subset V(\R)$
with $S\subset K\cup Z(\R)$. This would imply $\inter(S\cap V_\reg
(\R))\subset K$, and by regularity at infinity we would get $\ol S$
compact, a contradiction.
\end{rem}

We will need the following version of embedded resolution of
singularities on a surface. By a curve on $X$ we mean a reduced
effective divisor on $X$.

\begin{thm}\label{relembres}
Let $k$ be an infinite field, $X$ a normal quasi-projective surface
over $k$, $C$ a curve in $X$ and $T$ a finite set of closed points in
$C_\sing\cap X_\reg$. Then there exists a birational morphism
$\varphi\colon\wt X\to X$ of $k$-varieties with the following
properties:
\begin{enumerate}
\item
$\varphi$ induces an isomorphism $\wt X\setminus\varphi^{-1}(T)\isoto
X\setminus T$;
\item
$\wt X$ is quasi-projective and $\wt X_\sing=\varphi^{-1}(X_\sing)$
(in particular, $\wt X$ is normal);
\item
in all points of $\varphi^{-1}(T)$, the divisor $\varphi^{-1}(C)$ on
$\wt X$ has normal crossings and non-singular components.
\end{enumerate}
\end{thm}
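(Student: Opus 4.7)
The plan is to reduce to the classical embedded resolution of a curve on a non-singular surface, performed independently at each point of~$T$. Since $T$ is a finite set of closed points lying in $X_\reg$ and $X$ is separated, I would first choose pairwise disjoint affine non-singular open neighborhoods $U_1,\dots,U_m\subset X_\reg$ of the points $p_1,\dots,p_m$ of~$T$. All blowups performed below will have centers lying over a single point of $T$, so modifications above distinct $p_i$ do not interfere; it therefore suffices to carry out the construction separately above each $p_i$ and then combine the results.

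Fix $p\in T$ with affine non-singular neighborhood $U\subset X_\reg$, and put $C_0:=C\cap U$, a curve on the non-singular surface~$U$. Here I would invoke the classical embedded resolution of plane curve singularities: there exists a finite sequence of point blowups
$$
U_n\overset{\pi_n}{\to}U_{n-1}\to\cdots\to U_1\overset{\pi_1}{\to}U_0=U,
$$
each $\pi_i$ centered at a closed point of $U_{i-1}$ lying above~$p$, such that the composition is an isomorphism over $U\setminus\{p\}$, each $U_i$ is non-singular (since blowing up a closed non-singular point on a non-singular surface yields a non-singular surface), and the total transform of $C_0$ in $U_n$ is a divisor with normal crossings and non-singular components. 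The assumption that $k$ is infinite enters the standard termination argument through the existence of sufficiently generic tangent directions transverse to any prescribed finite configuration of branches.

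To globalize, I perform the same sequence of blowups directly on $X$: every required center is a closed point of the preceding $U_i$ and hence of the preceding global model, while outside $U_i$ the variety is untouched. Repeating this for each $p\in T$ produces the desired morphism $\varphi\colon\wt X\to X$. Property~(1) holds because every blowup center lies over~$T$, so $\varphi$ is an isomorphism on the complement. For~(2), the blowup of a quasi-projective variety in a closed subscheme is again quasi-projective, so $\wt X$ is quasi-projective; moreover, every center lies in the non-singular locus of the preceding variety, so no new singularities are introduced and the singularities of $X$ are left unchanged, yielding $\wt X_\sing=\varphi^{-1}(X_\sing)$; in particular $\wt X$ is normal, being locally either isomorphic to an open of the normal variety $X$ or non-singular above an exceptional fibre. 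Property~(3) at points of $\varphi^{-1}(T)$ is precisely what the local embedded resolution provides.

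The main obstacle is the local embedded resolution on a non-singular surface itself; I would invoke it as a classical fact rather than reprove it. Its technical heart is termination of the blowup procedure, which is handled by induction on an invariant of the singularity such as the multiplicity of the strict transform at infinitely near points together with bookkeeping for non-transverse intersections; it is precisely in this termination argument that the hypothesis ``$k$ infinite'' plays its role.
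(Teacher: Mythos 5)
Your proposal is correct and follows essentially the same route as the paper: both reduce to the classical embedded resolution of a curve on a non-singular surface (invoked as a known result, e.g.\ Hartshorne V.3.7 or Cutkosky for infinite base fields), using that $T\subset X_\reg$ to localize away from $X_\sing$ and from $C_\sing\setminus T$, and then glue or extend the local modifications back to all of $X$. The paper does this in one step on the open set $X_0=X\setminus\bigl(X_\sing\cup(C_\sing\setminus T)\bigr)$ and glues with the identity on $X\setminus T$, whereas you work point by point on disjoint neighborhoods and perform the blowups globally; these are only bookkeeping variants of the same argument.
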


\begin{proof}
If $X$ is non-singular and $T=C_\sing$, our statement becomes the
usual one for embedded resolution of curves in surfaces (see for
example \cite{Ha} Thm.\ V.3.7, or \cite{Cu} Sect.\ 3.5 for the case
of an arbitrary infinite base field). This implies the above version,
since $T$ is contained in $X_\reg$. In more detail, let $S=X_\sing
\cup(C_\sing\setminus T)$ and put $X_0=X\setminus S$, a non-singular
quasi-projective surface. Applying usual embedded resolution of
singularities to the divisor $C\cap X_0$ on $X_0$ and glueing it with
the identity of $X\setminus T$ yields the above version.
\end{proof}

The following result proves the existence of compatible completions
for surfaces, when the \sa\ set is regular at infinity (Definition
\ref{dfnreginfty}):

\begin{thm}\label{exgocoreginf}
Let $V$ be a connected normal quasi-projective surface over $\R$, and
let $S$ be a \sa\ subset of $V(\R)$ that is regular at infinity. Then
$V$ has a projective completion which is compatible with $S$. If $V$
is non-singular then the completion can be chosen to be non-singular
as well.
\end{thm}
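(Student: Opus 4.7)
The plan is to reduce the existence statement to a single application of embedded resolution of singularities (Theorem~\ref{relembres}), applied to a curve that encodes simultaneously the divisor at infinity of $V$ and the Zariski closure of the topological boundary of $S$. Using the decomposition $S = S_0 \cup S_1$ coming from regularity at infinity, note that $\ol{S_0} \subset V(\R)$ is compact and hence disjoint from the complement of $V$ in any completion, so compatibility depends only on $S_1$ and one may assume $S$ itself is regular. Choose any projective completion $V \into X_0$, normalize it (this leaves $V$ untouched because $V$ is already normal), blow up any irreducible components of $X_0 \setminus V$ of codimension $\ge 2$ so that $X_0 \setminus V$ is a divisor, and finally apply standard surface resolution of singularities to the singular points of $X_0$ lying in $X_0 \setminus V$; this can be done without disturbing $V$. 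If $V$ is nonsingular then $X_0$ is now nonsingular everywhere.

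The key construction is then the following. Let $C_1 \subset X_0$ be the Zariski closure of the topological boundary of $S$ in $V(\R)$; since $S$ is regular and $V$ is two-dimensional, this boundary is semi-algebraic of dimension at most one, so $C_1$ is a reduced curve (ignoring the degenerate case when it is empty or zero-dimensional, where one simply omits it) whose components each meet $V$ in a nonempty open subset. Put $C := (X_0 \setminus V) \cup C_1$ and apply Theorem~\ref{relembres} to $(X_0, C, T)$ with $T := C_\sing \cap (X_0 \setminus V)$; the preparation above guarantees $T \subset (X_0)_\reg$ and $T \cap V = \emptyset$. The resulting morphism $\pi \colon X \to X_0$ is therefore an isomorphism on a neighborhood of $V$, so $V \into X$ embeds as an open dense subvariety; $X$ is normal and projective, and nonsingular whenever $V$ is. At every point of $X$ lying over $X_0 \setminus V$, the divisor $\pi^{-1}(C)$ has normal crossings with nonsingular components: over $T$ this is part of Theorem~\ref{relembres}, and over the smooth part of $C$ in $X_0 \setminus V$ it holds because $\pi$ is an isomorphism there.

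It remains to verify that $V \into X$ is $S$-compatible. Condition~(1) of Definition~\ref{dfngoodcompl} is automatic from normality of $X$. For condition~(2), fix an irreducible component $Z$ of $X \setminus V$ and a point $q \in \ol S \cap Z(\R)$; here $q$ is automatically a nonsingular point of $X$, because after the preparation the singularities of $X_0$ all lie inside $V$. It suffices to exhibit a Euclidean arc in $Z(\R)$ through $q$ that is contained in $\ol S$, since any infinite subset of an irreducible curve $Z$ is Zariski dense in $Z$. Regularity of $S$ yields $\ol S = \ol{\inter(S \cap V_\reg(\R))}$, so $q$ lies in the closure of the open set $\Sigma := \inter(S \cap V_\reg(\R))$. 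The topological frontier of $\Sigma$ is contained in $C_1$, hence in the normal crossings divisor $\pi^{-1}(C)$; choosing local coordinates $(x,y)$ around $q$ in which $Z = \{x = 0\}$ and any further component of $\pi^{-1}(C)$ through $q$ is $\{y = 0\}$, the set $\Sigma$ must therefore agree locally with a union of the two or four open coordinate quadrants that lie inside $V$. At least one such quadrant is actually present, since $q \in \ol\Sigma$, and the closure of that quadrant meets $Z = \{x=0\}$ in an arc through $q$, as required.

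The hardest step is precisely the last local claim: regularity of $S$ together with the normal crossings structure of $\pi^{-1}(C)$ must force $\Sigma$ to be locally a union of coordinate quadrants near $q$, thereby ruling out cusp-like or tangential contact of $\Sigma$ with $Z$ that would otherwise leave $\ol S \cap Z(\R)$ finite and nonempty. The decisive point is that $C_1$ has been defined so as to contain the entire frontier of $\Sigma$, so that once $C$ has been resolved to normal crossings no further pathology can appear at infinity; this is where the hypothesis ``regular at infinity'' really enters, and the nonsingular case comes at no extra cost because Theorem~\ref{relembres} preserves nonsingularity of $X_0$.
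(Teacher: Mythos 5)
Your proof is correct and follows essentially the same route as the paper's: the same preparation of a normal projective completion with singularities pushed into $V$ and boundary of pure codimension one, the same single application of embedded resolution (Theorem~\ref{relembres}) to the union of the divisor at infinity with the Zariski closure of the boundary of $S$, and the same concluding local argument that normal crossings forces $\ol S$ to contain a full coordinate quadrant, hence an arc of any component of $X\setminus V$ it meets. The minor variations (reducing first to $S$ regular rather than invoking regularity at infinity at the end, and taking the boundary inside $V(\R)$ rather than inside $X(\R)$, a difference absorbed by adjoining $X\setminus V$ to the curve being resolved) are cosmetic.
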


\begin{proof}
We may assume that the function field $\R(V)$ is real. For otherwise,
$V(\R)$ consists of finitely many singular points of $V$. In that
case, any normal completion of $V$ will be compatible with any subset
of $V(\R)$.

We can also assume that $S$ is closed in $V(\R)$. Start with any
completion $V\into X_1$ of $V$. Since $V$ is a normal surface, the
singular set $V_\sing$ is finite. By resolving singularities of $X_1
\setminus V_\sing$ and gluing the resulting surface to $V$ we can
assume $X_{1,\sing}\subset V$. Let $\ol S$ be the closure of $S$ in
$X_1(\R)$, let $\partial\ol S$ be its boundary in $X_1(\R)$, and
denote by $C_1$ the Zariski closure of $\partial\ol S$ in $X_1$, a
curve on $X_1$.
Further write $D_1=X_1\setminus V$. Now apply embedded resolution, as
stated in Theorem \ref{relembres}, to the surface $X_1$, the curve
$C_1\cup D_1$ on $X$ and the set $T:=(C_1\cup D_1)_\sing\cap D_1$. We
obtain a birational morphism $\pi\colon X\to X_1$ whose restriction
$\pi^{-1}(V)\to V$ is an isomorphism. Via $\pi$ we can regard $X$ as
a completion of $V$. Let $D=X\setminus V=\pi^{-1}(D_1)$ and $C=
\pi^{-1}(C)$. The completion $V\into X$ has the following properties:
\begin{enumerate}
\item
$X$ is projective and normal (non-singular if $V$ is non-singular);
\item
the irreducible components of the divisor $C\cup D$ are non-singular
in all points of $D$, and if two components of $C\cup D$ meet in a
point of $D$, then that point is a normal crossing singularity.
\end{enumerate}
We show that $V\into X$ is compatible with $S$.
Let $Z$ be an irreducible component of $D=X\setminus V$ for which the
set $\ol S\cap Z(\R)$ is non-empty. We have to show that this set is
Zariski dense in $Z$. Let $x\in\ol S\cap Z(\R)$, and let $u\in\scrO_
{X,x}$ be a local equation for $Z$. There can be at most one more
irreducible component $Z'\ne Z$ of $C\cup D$ which passes through
$x$, by property (2). If such $Z'$ exists, let $v\in\scrO_{X,x}$ be a
local equation for $Z'$. Then $u,\,v$ is a regular system of
parameters for $\scrO_{X,x}$. If no such $Z'$ exist we put $Z'=Z$ and
let $v\in\scrO_{X,x}$ be any element for which $u,\,v$ is a regular
system of parameters for $\scrO_{X,x}$.

Let $U$ be an open neighbourhood of $x$ in $X(\R)$ such that $U\cap
(C\cup D)(\R)\subset(Z\cup Z')(\R)$. By shrinking $U$ we can assume
that $u$ and $v$ are regular functions on $U$.
Shrinking $U$ further if necessary, there are precisely four
connected components of $\{y\in U\colon(uv)(y)\ne0\}$. Since $S$ is
regular at infinity, it follows that $\ol S$ contains at least one of
these four components. In particular, $\ol S\cap Z(\R)$ contains a
non-empty open subset of $Z(\R)$ and is therefore Zariski dense in
$Z$. This completes the proof.
\end{proof}

\begin{rems}
\hfil

1.\ We do not know whether Theorem \ref{exgocoreginf} extends to
$\dim(V)>2$. See however Theorem \ref{exgocovr}.  \smallskip

2.\
In Theorem \ref{exgocoreginf}, the regularity of $S$ at infinity is
not necessary in order that a compatible completion exists. This is
demonstrated by simple examples like the following: Let $V=\A^2$ and
consider the set
$$S\>=\>\{(x,y)\in\R^2\colon xy^2\ge0\}.$$
$S$ is the union of the right half plane with the $x$-axis. Clearly
$\P^2$ is a completion of $\A^2$ which is compatible with $S$, even
though $S$ is not regular at infinity. On the other hand, when
$S\subset V(\R)$ is unbounded but contained in the union of a compact
set and a proper subvariety of $V$ (so $S$ fails to be Zariski dense
at infinity), there cannot be any completion of $V$ compatible with
$S$ (see Prop.~\ref{condconds}.)
\smallskip

3.\
If $C$ is a possibly singular (irreducible) affine curve, it is still
true that $C$ has a unique projective completion $X$ which is
non-singular in the added points (c.f.\ also \cite{Sch:MZ} Lemma 4.5).
The completion $X$ is compatible with every \sa\ subset $S$ of
$C(\R)$, and Theorem \ref{gococalxbs} holds in this case as well.
\smallskip

4.\ Let $X$ be a non-singular connected projective surface over $\R$,
and let $D$ be a curve on $X$ with irreducible components $Z_1,\dots,
Z_r$. Then it is sometimes possible to calculate the transcendence
degree of the ring $B=\scrO_X(X\setminus D)$ from the intersection
matrix $M_D=(Z_i\,.\,Z_j)_{i,j=1,\dots,r}$ of the divisor $D$:
\begin{enumerate}
\item
If $M_D$ is negative definite, then $\trdeg_\R(B)=0$, hence $B=\R$.
\item
If $M_D$ has a positive eigenvalue, then $\trdeg_\R(B)=2$.
\end{enumerate}
If $M_D$ is negative semidefinite and singular, there is no general
statement about the transcendence degree of $B$. (See \cite{Ii} 8.3
for proofs.)

For instance, consider $T=\{(x,y)\in\R^2\colon|x|\le1$, $|xy|\le1\}$
as in Example \ref{exgocobs2}. There we found $B(T)=\scrO(X_2
\setminus D)$ with $D=L''\cup E_1'$ (using the notation from
\ref{exgocobs2}). The intersection matrix of $D$ is
$$M_D=\left(\begin{array}{rr}0&1\\1&-2\end{array}\right),$$
which has a positive eigenvalue. Thus we conclude $\trdeg\,B(T)=2$
without actually calculating the ring $B(T)$ as in \ref{exgocobs2}.
In this example, the transcendence degree can also be read off using
Thm.~\ref{4conds}, since the birational map $\A^2\to\A^2$, $(x,y)
\mapsto(x,xy)$ is bounded on $S$.
\end{rems}

Let $V$ be an affine normal $\R$-variety and let $S\subset V(\R)$ be
a \sa\ subset. We have seen that the ring $B_V(S)$ is isomorphic to a
ring $\scrO_U(U)$ for some quasi-projective $\R$-variety $U$, provided
that $V$ has a completion which is compatible with $S$. We are now
going to prove a converse (Thm.\ \ref{owalsbs} below). It roughly
states, for every quasiprojective $U$ which is real, that the ring
$\scrO_U(U)$ can be realized as $B_V(S)$ for some open affine subset
$V$ of $U$ and some \sa\ subset $S$ of $V(\R)$.

\begin{lem}\label{compnotouch}
Let $V$ be an irreducible affine $\R$-variety, and let $X$ be a
projective completion of $V$ which is normal. Let $Y_1,\dots,Y_r$ be
irreducible components of $X\setminus V$ which are real, and let
$Z_1,\dots,Z_s$ be the remaining components (real or not) of
$X\setminus V$. There exists a regular and basic closed \sa\ subset
$S$ of $V(\R)$ that is compatible with $Y_1,\dots,Y_r$ and whose
closure in $X(\R)$ is disjoint from $Z_1(\R),\dots,Z_s(\R)$. In
particular, the completion $X$ of $V$ is compatible with $S$.
\end{lem}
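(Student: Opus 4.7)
The plan is to build $S$ as a basic closed set of the form $\{f_1\ge0,\dots,f_s\ge0\}$, using the Baer--Krull theorem to arrange compatibility with each $Y_i$ and a ``difference of squares'' construction (modelled on the example $x^2-y^2$ in the discussion of \ref{exgocobs1}--\ref{exgocobs2}) to force sign-definite leading behaviour along each $Z_j$.

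First, I would invoke Baer--Krull as in the proof of Theorem~\ref{exgocovr}: since $Y_i$ is real, the residue field $\R(Y_i)$ admits orderings; fixing one such residue ordering and a sign for a uniformiser of $v_{Y_i}$ yields an ordering $\alpha_i$ of $\R(V)=\R(X)$ compatible with $v_{Y_i}$. Crucially, the residue ordering remains at my disposal.

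Second, since $V$ is affine and $X$ projective normal, $X\setminus V$ is the support of an ample effective divisor $H$. For each $j$, I would choose two sections $h_{j,1},h_{j,2}\in H^0(X,\scrO_X(nH))$ with no common zero on $Z_j(\R)$ (possible for $n\gg0$ by very-ampleness of $nH|_{Z_j}$), and an additional section $g_j\in H^0(X,\scrO_X(2nH-Z_j))$ (having strictly greater vanishing order along $Z_j$). Set
$$f_j\>:=\>\frac{g_j-h_{j,1}^2-h_{j,2}^2}{s_H^{2n}}\>\in\>\R[V],$$
where $s_H\in H^0(X,\scrO_X(H))$ defines $H$. Along $Z_j$ the dominant term is $-(h_{j,1}^2+h_{j,2}^2)/s_H^{2n}$, whose leading coefficient on $Z_j(\R)$ is the negative of a strictly positive sum of squares; hence the residue of $f_j$ along $Z_j$ is strictly negative on $Z_j(\R)$, forcing $f_j(x)\to-\infty$ uniformly as $x\in V(\R)$ approaches any point of $Z_j(\R)$. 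Therefore $\ol{\{f_j\ge0\}}\cap Z_j(\R)=\emptyset$.

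Third, the residue of $f_j$ along each $Y_i$ is (for generic sections) a non-constant element of $\R(Y_i)$ whose sign on $Y_i(\R)$ varies. Using the freedom from Baer--Krull, I choose the residue ordering of $\alpha_i$ so that this residue is positive at $\alpha_i$ for every $j$ simultaneously; the finitely many open conditions in $\Sper\R(Y_i)$ admit a common solution for generic sections. Setting $S:=\{f_1\ge0,\dots,f_s\ge0\}$, each $\alpha_i\in\wt S\cap\Sper\R(V)$ is compatible with $v_{Y_i}$, so by Lemma and Definition~\ref{dfncomp}, $S$ is compatible with $Y_i$. Basic closedness holds by definition, and regularity follows by choosing the sections generically so that $\{f_j>0\}$ is dense in $\{f_j\ge0\}$.

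The hard part will be producing the sections with the required vanishing and no-common-zero conditions on $Z_j(\R)$, particularly near intersection points of $Z_j$ with other components of $X\setminus V$, where ``leading coefficient'' is ambiguous without local uniformisers. I would handle this by first applying embedded resolution (Theorem~\ref{relembres} in dimension~2, or a higher-dimensional analogue) to make $X\setminus V$ a normal-crossings divisor with smooth components; then for $n\gg0$, generic sections of $\scrO_X(nH)$ achieve the required restrictions to each $Z_j$ by very-ampleness. The simultaneous compatibility of the ``positive residue along $Y_i$'' conditions across all $j$ and $i$ is a finite intersection of nonempty open conditions in the parameter space of sections, satisfied for generic choices.
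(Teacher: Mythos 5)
Your overall strategy --- one global inequality per unwanted component $Z_j$, forcing $f_j\to-\infty$ along $Z_j(\R)$, plus Baer--Krull to keep an ordering compatible with each $v_{Y_i}$ inside $\wt S$ --- is quite different from the paper's, and it has a genuine gap at its crucial point. To get $\alpha_i\in\wt S$ you must arrange that \emph{all} of $f_1,\dots,f_s$ are simultaneously non-negative at a \emph{single} ordering $\alpha_i$ compatible with $v_{Y_i}$; after Baer--Krull this amounts to finding one ordering of the residue field $\R(Y_i)$ (and one sign of a uniformiser) making $s$ prescribed leading coefficients positive at once. You assert this holds ``for generic sections'' because it is ``a finite intersection of nonempty open conditions'', but finitely many nonempty open subsets of $\Sper\R(Y_i)$ need not meet, and genericity in the parameter space of sections does not obviously produce a common point; no argument is given. (Note also that $f_j$ may have a pole along $Y_i$, so ``the residue of $f_j$ along $Y_i$'' is not defined without first extracting the leading term with respect to $v_{Y_i}$.) There are further unproved steps: the complement of an affine open subset of a normal projective variety supports an effective divisor of pure codimension one, but not in general an \emph{ample} one, so the existence of your $H$ and $s_H$ needs justification; the uniform divergence $f_j\to-\infty$ on all of $Z_j(\R)$, including points where $Z_j$ meets other boundary components, is exactly the delicate point you defer to embedded resolution --- but resolving modifies the completion, while the lemma is about the given $X$ (and Theorem \ref{relembres} is only stated for surfaces); and the regularity of $S$ is not actually checked.

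The paper's proof is far more economical and entirely local. Since $X$ is normal and $Y_i$ is real, one picks a point $p_i\in Y_i(\R)$ that is non-singular on $X$ and on $X\setminus V$ (so it lies on no other boundary component); then $p_i$ generalizes to some $\beta_i\in\Sper\R(Y_i)$ and further to some $\alpha_i\in\Sper\R(X)$, so \emph{any} semi-algebraic set whose associated constructible set contains $\alpha_i$ is compatible with $Y_i$. A small closed euclidean ball $S_i$ around $p_i$, cut with $V(\R)$, does the job, and being small its closure in $X(\R)$ misses every $Z_j(\R)$. The only remaining work is to see that the disjoint union $S=S_1\cup\dots\cup S_r$ is \emph{basic} closed, which is done by clearing denominators (using normality of $V$) and taking the single inequality $(-1)^{r+1}f_1\cdots f_r\ge0$. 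The lesson, if you want to salvage your approach, is that $S$ need not be large: compatibility with $Y_i$ only requires $\ol S\cap Y_i(\R)$ to be Zariski dense in $Y_i$, which a tiny ball at a well-chosen smooth point already achieves, with no global sections, no ampleness, and no resolution required.
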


\begin{proof}
Fix an index $i\in\{1,\dots,r\}$. Since $X$ is normal and $Y_i$ is
real, we find a point $p_i\in Y_i(\R)$ which is non-singular on $X$
and on $X\setminus V$. By this last fact, $p_i$ has a generalisation
$\beta_i$ in $\Sper\R(Y_i)$. Since $X$ is normal, $\beta_i$ has a
generalization $\alpha_i$ in $\Sper\R(X)$.
Every \sa\ subset $S$ of $X(\R)$ with $\alpha_i\in\wt S$ is
compatible with $Y_i$. There is an open affine subset $U$ of $X$ such
that $U(\R)=X(\R)$.
Let $x_1,\dots,x_n$ be a system of generators for the $\R$-algebra
$\R[U]$. If $c_i\in\R$ is positive and sufficiently small, the closed
\sa\ set
$$S_i:=\Bigl\{p\in V(\R)\colon \sum_{k=1}^n(x_k(p)-x_k(p_i))^2\le c_i
\Bigr\}$$
will be contained in $V_\reg(\R)$ and regular, and moreover $S_i$
will be compatible with $Y_i$ and disjoint from $Z_1(\R)\cup\cdots
\cup Z_s(\R)$. This being done for $i=1,\dots,r$, we may further
assume that $S_1,\dots,S_r$ are pairwise disjoint, by making $c_1,
\dots,c_r$ even smaller if necessary.

We claim that $S:=S_1\cup\cdots\cup S_r$ is a basic closed \sa\
subset of $V(\R)$, which will complete the proof.
For $i=1,\dots,r$ put
$$g_i:=c_i-\sum_{k=1}^n(x_k-x_k(p_i))^2\;\in\R[U],$$
and let $V_0$ be the maximal Zariski open subset of $V$ for which
$g_i|_{V_0}\in\scrO_V(V_0)$ for $i=1,\dots,r$. Since $V$ is normal,
the closed subset $Z=V\setminus V_0$ has pure codimension one in $V$.
Clearly $Z(\R)=\emptyset$ since $U(\R)=X(\R)$ and the $g_i$ are
regular on $U$. Choose $h_1,\dots,h_k\in\R[V]$ which generate the
vanishing ideal of $Z$ in $\R[V]$, and put $h:=h_1^2+\cdots+h_k^2$.
Then $h$ has no real zeros on $V$, and since $V$ is normal, there
exists $N\ge1$ such that $f_i:=h^Ng_i\in\R[V]$ for $i=1,\dots,r$.
Thus $S_i=\{p\in V(\R)\colon f_i(p)\ge0\}$ for every $i$, and hence
$$S_1\cup\cdots\cup S_r\>=\>\bigl\{p\in V(\R)\colon(-1)^{r+1}f_1(p)
\cdots f_r(p)\ge0\bigr\},$$
since the $S_i$ are pairwise disjoint.
\end{proof}

An effective Weil divisor $D$ on a normal $\R$-variety $X$ will be
called \emph{totally real} if every irreducible component of $D$ is
a real variety. By $\Cl(X)$ we denote the class group of Weil
divisors modulo linear equivalence on $X$. We will use the following
theorem:

\begin{thm}[Roggero \cite{Ro}]\label{roggero}
Let $V$ be a normal affine real $\R$-variety of dimension at least
$2$. Then every divisor on $V$ is linearly equivalent to a totally
real effective divisor on $V$.
\qed
\end{thm}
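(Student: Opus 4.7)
The plan is to proceed in two stages: first reduce to finding a totally real representative inside a complete linear system of an effective divisor, and then invoke a Bertini-type argument to exhibit such a representative.

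\emph{Reduction to effective divisors.} Given $D\in\Div(V)$, write $D=D_+-D_-$ with $D_\pm$ effective, and let $\mathfrak{p}_1,\dots,\mathfrak{p}_k\subset\R[V]$ be the height-one primes corresponding to the components of $D_-$, with multiplicities $m_1,\dots,m_k$. The intersection of symbolic powers $\bigcap_i\mathfrak{p}_i^{(m_i)}$ is a nonzero ideal of the Noetherian ring $\R[V]$, so any nonzero $g$ in it satisfies $(g)+D\ge0$. Thus we may assume $D\ge0$, and the task reduces to producing a totally real element of the complete linear system $|D|$.

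\emph{Bertini step.} Consider $L(D)=\{f\in\R(V)^*\colon(f)+D\ge0\}\cup\{0\}$, an infinite-dimensional $\R$-vector space since $V$ is affine. Choose a finite-dimensional subspace $W\subset L(D)$ large enough that the associated sub-linear-system is nontrivial and has no extraneous fixed components, and pick a non-singular real point $p\in V_\reg(\R)$ (available because $V$ is real). The affine-linear subspace $W_p\subset W$ of those $f$ with $((f)+D)(p)=0$ is nonempty of positive dimension. Classical Bertini, applied after base change to $\C$, guarantees that for a Zariski-general $f$ in the projectivisation of $W$, the divisor $E_f:=(f)+D$ is geometrically irreducible; since $W(\R)$ is Zariski dense in $W_\C$, a generic real $f\in W_p$ still produces an irreducible $E_f$. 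Being irreducible and containing the non-singular real point $p$ forces $E_f$ to be a real prime divisor, so it is the desired totally real effective representative of $[D]$.

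\emph{Main obstacle.} The principal technical difficulty is the real-algebraic Bertini step: one must verify that the locus of $f$ for which $E_f$ becomes reducible is a proper constructible subset of the $\C$-parameter space, and then use Zariski-density of $W_p(\R)$ to find a real $f$ in the complement. Additional bookkeeping is needed to handle any fixed components of $|D|$, which must either be absorbed into $D$ by a preliminary linear equivalence or shown to be real on other grounds; one likely argues by induction on the number of non-real prime components, modifying one at a time. The hypothesis $\dim(V)\ge2$ enters essentially in this step, since Bertini's irreducibility conclusion holds only once the ambient variety has dimension at least two.
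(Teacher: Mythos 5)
The paper offers no proof of this statement---it is imported verbatim from Roggero's article \cite{Ro} and closed with a \textnf{qed} box---so there is no internal argument to compare with. Your overall plan (reduce to an effective divisor via symbolic powers, then exhibit a real irreducible member of a large sub-linear-system of $|D|$ by applying Bertini over $\C$ and using Zariski density of the real parameter points) is the natural route and is in the spirit of what Roggero actually does; the reduction step and the use of the hypotheses ($V$ affine to make $L(D)$ large, $V$ real to supply $p$, $\dim V\ge2$ for Bertini irreducibility) are all sound.

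There is, however, a genuine gap at the very last inference: ``being irreducible and containing the non-singular real point $p$ forces $E_f$ to be a real prime divisor.'' In the paper's sense (see 1.1), a prime divisor $Z$ is real when $Z$ \emph{itself} has a non-singular real point, equivalently $\R(Z)$ is formally real, equivalently $Z(\R)$ is Zariski dense in $Z$. Passing through a point that is non-singular on the ambient $V$ does not give this, because $E_f$ may be singular at $p$ and have no other real points. Concretely, in $V=\A^2$ the curve $y^2+x^2(x^2+1)=0$ is geometrically irreducible and contains the point $(0,0)\in\A^2_\reg(\R)$, yet $(0,0)$ is its only real point and is a singular point of the curve; this prime divisor is not real. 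To close the gap you must arrange that the generic member of $W_p$ is non-singular \emph{at} $p$. Note that Bertini's smoothness theorem does not supply this for free, since $p$ is a base point of the subsystem $W_p$ and Bertini only controls singularities away from the base locus. What is needed (and is easy to arrange, since $V$ is affine and you may enlarge $W$ so that the local equations of its members separate tangent directions at $p$) is the observation that the $f\in W_p$ whose divisor is singular at $p$ form a proper linear subspace of $W_p$; then a generic real $f\in W_p$ yields $E_f$ irreducible and smooth at the real point $p$, hence real. A second, smaller point: your ``no extraneous fixed components'' requirement is not automatic for an arbitrary effective $D$ and deserves the reduction (by additivity of linear equivalence) to the case where $D$ is a single prime divisor, for which one checks directly that $|D|$ is fixed-component free on an affine normal $V$. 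With these two supplements the argument goes through.
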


\begin{cor}\label{roggproj}
Let $X$ be a real normal projective $\R$-variety of dimension at
least~$2$, and let $H$ be an ample effective divisor on $X$. For
every divisor $D$ on $X$ there exists a totally real effective
divisor $D_0$ on $X$ such that $D\sim D_0+H_0$, where $H_0$ is a
linear combination of irreducible components of $H$.
\end{cor}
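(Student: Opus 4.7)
The plan is to reduce the statement on $X$ to Roggero's Theorem applied to the affine open complement of $\supp(H)$. The key point is that since $H$ is ample, a suitable multiple $nH$ is very ample, and so $V:=X\setminus\supp(H)=X\setminus\supp(nH)$ is an affine open subset of $X$. Being open and dense in the normal irreducible variety $X$, the variety $V$ is itself normal, irreducible, and real (its function field $\R(V)=\R(X)$ is formally real because $X$ is real), and of dimension $\dim(V)=\dim(X)\ge2$. Thus Theorem~\ref{roggero} applies to~$V$.

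Next I would restrict the given divisor $D$ to $V$ and apply Roggero's theorem: there exists a totally real effective divisor $D_0'$ on $V$ with $D|_V\sim D_0'$ on $V$, say $D|_V-D_0'=\mathrm{div}_V(f)$ for some $f\in\R(V)^*=\R(X)^*$. I would then define $D_0$ to be the Zariski closure of $D_0'$ in $X$, i.e.\ the sum (with the same multiplicities) of the closures in $X$ of the prime divisors of $V$ occurring in $D_0'$. Each such closure is a prime divisor on $X$ with the same function field as the corresponding component of $D_0'$, hence is real; so $D_0$ is again a totally real effective divisor on $X$.

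Finally I would compare $D$ with $D_0$ on $X$. Setting $H_0:=D-D_0-\mathrm{div}_X(f)$, one has $H_0|_V=D|_V-D_0'-\mathrm{div}_V(f)=0$, so the support of $H_0$ is contained in $X\setminus V=\supp(H)$. Since the group of Weil divisors supported on $\supp(H)$ is freely generated by the irreducible components of $H$, the divisor $H_0$ is an integer linear combination of those components, and $D\sim D_0+H_0$ as required.

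The only non-routine step is the affineness of $V=X\setminus\supp(H)$, which I would justify by passing to a very ample multiple $nH$ and invoking the standard fact that the complement of a very ample hypersurface in a projective variety is affine; everything else is bookkeeping about how prime divisors and principal divisors behave under the open immersion $V\hookrightarrow X$.
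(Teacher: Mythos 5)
Your proof is correct and follows essentially the same route as the paper: the paper also sets $V=X\setminus H$, invokes Roggero's theorem on this affine variety, and uses the fact that the kernel of $\Cl(X)\to\Cl(V)$ is generated by the components of $H$. You have merely written out in detail the bookkeeping (affineness of $V$, closure of $D_0'$, and the excision argument) that the paper compresses into two lines.
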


\begin{proof}
Let $V=X\setminus H$. The kernel of the restriction map $\Cl(X)\to\Cl
(V)$ is generated by (the class of) $H$. Thus it suffices to apply
Roggero's theorem to the affine variety $V$.
\end{proof}

\begin{lem}\label{realample}
Let $U$ be a connected normal quasi-projective $\R$-variety which is
real. There exist real prime divisors $Y_1,\dots,Y_r$ on $U$ such
that the variety $U\setminus(Y_1\cup\cdots\cup Y_r)$ is affine.
\end{lem}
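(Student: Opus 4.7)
The plan is to construct an effective divisor of ample class on a normal projective completion $X\supset U$ whose support is the union of $X\setminus U$ and finitely many real prime divisors $Y_1,\dots,Y_r$ of $U$; ampleness then forces $U\setminus(Y_1\cup\cdots\cup Y_r) = X\setminus|F'|$ to be affine. The case $\dim U\le 1$ is handled directly: a real quasi-projective curve is either already affine (take $r=0$) or becomes so after deleting an $\R$-rational point (which exists by realness of $U$).

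For $\dim U\ge 2$, I first take a normal projective completion $X$ of $U$ and, by blowing up the higher-codimensional components of $X\setminus U$ and renormalizing, arrange $X\setminus U$ to be of pure codimension one, with irreducible components $Z_1,\dots,Z_s$. I then choose an ample effective Cartier divisor $A$ on $X$ whose support contains $X\setminus U$; this is standard, because for a very ample line bundle $L$ the sheaf $L^{\otimes N}\otimes\mathcal{I}_{Z_i}$ has non-zero sections for $N$ large, so one can sum effective divisors vanishing on each $Z_i$. Applying Corollary \ref{roggproj} to $A$, I obtain a decomposition $A\sim D_0+A_0$ with $D_0$ totally real effective and $A_0$ an integer linear combination of irreducible components of $A$. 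Inspecting the proof of the corollary (via Roggero's Theorem \ref{roggero} applied to the affine open $V:=X\setminus|A|$), one may further take the components of $D_0$ to lie in $V\subseteq U$, so they are real prime divisors of $U$.

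For $N$ sufficiently large, the effective Weil divisor $F:=NA+D_0$ has class $(N+1)[A]-[A_0]$, which is ample by openness of the ample cone, and its support contains $X\setminus U$ through $|A|$. The remaining obstacle is that the components of $A$ lying in $U$ (call them $W_1,\dots,W_t$) may be non-real and yet appear in $|F|$. To remove them, I iterate Corollary \ref{roggproj}: for each non-real component $W_j$, apply the corollary to $W_j$ itself to produce a totally real effective representative of $[W_j]$ modulo the subgroup of $\Cl(X)$ generated by the components of $A$, and substitute this representative in; the change of class lies in this fixed finitely generated subgroup, so ampleness of the resulting class is preserved for $N$ large. After finitely many such substitutions one obtains an effective divisor $F'$ of ample class whose support is $(X\setminus U)\cup(Y_1\cup\cdots\cup Y_r)$ with the $Y_j$ real prime divisors of $U$, and then $U\setminus\bigcup_j Y_j = X\setminus|F'|$ is affine.

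The hard part is carrying out this iterative elimination of non-real components while maintaining effectivity and ampleness at each stage; the key is that every substitution alters the class only by an element of the fixed finitely generated subgroup of $\Cl(X)$ generated by components of $A$, so after scaling $N$ up sufficiently the ample cone condition is preserved throughout.
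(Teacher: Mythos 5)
Your overall strategy (find an effective divisor of ample class whose support is $X\setminus U$ together with real prime divisors of $U$, then use that the complement of such a divisor is affine) is the same as the paper's, and the reduction to a projective completion with pure codimension-one boundary and the use of Corollary \ref{roggproj} are both on target. But the step you yourself flag as "the hard part" --- the iterative elimination of the non-real components $W_j$ of $A$ lying in $U$ --- is a genuine gap, and as described it does not work. Each application of Corollary \ref{roggproj} to $W_j$ writes $W_j\sim D_j+H_j$ where $H_j$ is an integer linear combination, with coefficients of arbitrary sign, of \emph{all} the components of $A$, including $W_j$ itself and the other non-real $W_k$. Substituting therefore (a) can destroy effectivity, since negative multiples of the $Z_i$ or $W_k$ may appear, and (b) reintroduces exactly the non-real components you are trying to remove, with no control on their new coefficients. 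Repeating the substitution produces a geometric-series-like expansion with no termination argument; the process is circular. (Note also that your appeal to ampleness "for $N$ large" is a red herring: each substitution is by a linearly equivalent divisor, so the class never changes; the obstruction is entirely about effectivity and support, not about staying in the ample cone.)

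The missing idea is an independent source of a \emph{totally real ample} divisor, which Roggero's theorem alone cannot provide because its conclusion is always only "modulo components of the chosen ample $H$". The paper supplies this via Bertini: in the projective case, a general member $Z$ of the linear system of divisors in $|H|$ passing through a fixed non-singular real point $p$ is irreducible with $Z_\sing\subset U_\sing$, hence real (it contains $p$ as a regular real point), and $U\setminus Z$ is affine since $Z$ is ample. With such a real ample prime divisor $H$ in hand, Corollary \ref{roggproj} applied to $-E$ (where $E=X\setminus U$) gives $-E\sim D+nH$ with $D$ totally real effective and the error term a multiple of the \emph{single real prime} $H$; then $E+D+mH\sim(m-n)H$ is effective and ample for $m>\max\{0,n\}$ and its support is $E$ union totally real divisors, which finishes the proof in one step with no iteration. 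I would encourage you to prove this Bertini statement and restructure your argument around it.
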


\begin{proof}
If $U$ is a curve, we can take $r=1$ and $Y_1$ any real point on
$U$. So we may assume $\dim(U)\ge 2$.  Assume furthermore that $U$
is projective. Fix a non-singular point $p\in U(\R)$ and an ample
divisor $H$ on $U$, and let $L\subset |H|$ consist of those $D\in
|H|$ that pass through $p$. The linear system $L$ has no fixed
components, so by Bertini's theorem (see for example Jouanolou
\cite{Jo}, Cor.~6.11), the set of all $Z\in L$ for which $Z$ is
irreducible and satisfies $Z_\sing\subset U_\sing$ has non-empty
interior inside $L$. In particular, it is non-empty. Any such $Z$ is
real since it contains $p$ as a regular real point. Moreover
$U\setminus Z$ is affine since $Z$ is ample.

When $U$ is only quasi-projective, fix an open dense embedding
$U\into X$ with $X$ normal and projective and such that $X\setminus
U$ has pure codimension one in $X$ (the latter can be achieved by
blowing up if necessary). Let $E$ be the divisor $X\setminus U$. By
the projective case above we find an ample prime divisor $H$ on
$X$ that is totally real. By Cor.\ \ref{roggproj}, there exist $n\in
\Z$ and a totally real effective divisor $D$ on $X$ such that
$-E\sim D+nH$.
For $m>\max\{0,n\}$ the divisor $E+D+mH$ is effective and ample. It
follows that the irreducible components $Y_1,\dots,Y_r$ of $(D+mH)
\cap U$ are real and $U\setminus(Y_1\cup\cdots\cup Y_r)$ is affine,
as desired.
\end{proof}

\begin{thm}\label{owalsbs}
Let $U$ be a connected normal quasi-projective $\R$-variety which
is real. There exists an open affine subset $V$ of $U$ and a regular
and basic closed subset $S$ of $V(\R)$ such that $\scrO_U(U)\isoto
B_V(S)$.
\end{thm}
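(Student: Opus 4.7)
The plan is to realize the given $U$ as the open subvariety $X\setminus Y$ that arises in Theorem~\ref{gococalxbs}, applied to a suitably chosen affine open $V\subset U$, a projective completion $V\into X$, and a \sa\ set $S\subset V(\R)$. Lemmas~\ref{realample} and~\ref{compnotouch} will supply, respectively, the open affine $V$ and the compatible \sa\ set $S$; combining them with Theorem~\ref{gococalxbs} yields the desired ring isomorphism essentially for free.

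First, I would apply Lemma~\ref{realample} to obtain real prime divisors $Y_1,\dots,Y_r$ on $U$ such that $V:=U\setminus(Y_1\cup\cdots\cup Y_r)$ is affine. Second, I would embed $U$ as an open subvariety of a normal projective $\R$-variety $X$ in which $X\setminus V$ has pure codimension one. Starting from any projective completion of $U$, normalize (which leaves $U$ untouched since $U$ is already normal), then blow up centers in $X\setminus U$ of codimension $\ge2$ and renormalize, exactly as invoked in the proof of Lemma~\ref{realample}. The irreducible components of $X\setminus V$ then split into the Zariski closures $\ol{Y_1},\dots,\ol{Y_r}$ in $X$ (each real, since $\R(\ol{Y_i})=\R(Y_i)$ and $Y_i$ is real) and the irreducible components $Z_1,\dots,Z_s$ of $X\setminus U$ (real or not).

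Third, apply Lemma~\ref{compnotouch} to $X$, with the real components taken to be $\ol{Y_1},\dots,\ol{Y_r}$ and the remaining ones $Z_1,\dots,Z_s$: this produces a regular, basic closed \sa\ subset $S\subset V(\R)$ that is compatible with each $\ol{Y_i}$ and satisfies $\ol S\cap Z_j(\R)=\emptyset$ for all $j$, and it certifies that $X$ is an $S$-compatible completion of $V$. Theorem~\ref{gococalxbs} then gives $\scrO_X(X\setminus Y')\isoto B_V(S)$, where $Y'$ is the union of those components of $X\setminus V$ disjoint from $\ol S$. By construction $Y'=Z_1\cup\cdots\cup Z_s=X\setminus U$, whence $X\setminus Y'=U$ and the isomorphism becomes $\scrO_U(U)\isoto B_V(S)$, as desired.

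The only step that requires any care is the second one: arranging for a single $X$ to serve simultaneously as a normal projective completion of both $U$ and of $V$, with $X\setminus V$ of pure codimension one. This is routine bookkeeping via normalization and blow-ups over $X\setminus U$ and contains no new geometric content beyond what is already used in the proof of Lemma~\ref{realample}; the rest of the argument is a direct assembly of the preceding results of this section.
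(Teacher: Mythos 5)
Your proposal is correct and follows essentially the same route as the paper's proof: Lemma~\ref{realample} supplies the affine open $V$, Lemma~\ref{compnotouch} supplies the regular basic closed set $S$ compatible with the closures of the $Y_i$ and avoiding the $Z_j$, and Theorem~\ref{gococalxbs} then yields $\scrO_U(U)\isoto B_V(S)$. The only (harmless) difference is that the extra care you devote to arranging $X\setminus U$ of pure codimension one is unnecessary: since $V$ is affine and $X$ is normal, $X\setminus V$ is automatically pure of codimension one, which is all that Lemma~\ref{compnotouch} and Theorem~\ref{gococalxbs} require, so the paper simply takes any normal projective completion of~$U$.
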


\begin{proof}
Let $U\into X$ be a normal projective completion of $U$, and let
$Z_1,\dots,Z_s$ be the irreducible components of $X\setminus U$. By
Lemma \ref{realample} there exist further prime divisors $Y_1,\dots,
Y_r$ on $X$ which are real and such that the open subset $V:=
X\setminus\bigl(\bigcup_iY_i\cup\bigcup_jZ_j\bigr)$ of $U$ is affine.
By Lemma \ref{compnotouch} we find a regular and basic closed \sa\
set $S$ in $V(\R)$ which is compatible with $Y_1,\dots,Y_r$ and which
satisfies $S\cap Z_j(\R)=\emptyset$ for $j=1,\dots,s$. In particular,
the completion $X$ of $V$ is compatible with $S$. Therefore $\scrO_U
(U)\isoto B_V(S)$ by Theorem \ref{gococalxbs}.
\end{proof}

Under certain conditions we know that the $\R$-algebra $B_V(S)$ is
finitely generated, see e.g.\ Thm.\ \ref{bvsfg} below. Writing $W:=
\Spec B_V(S)$ for the affine $\R$-variety defined by this ring, the
canonical morphism $\varphi_S\colon V\to W$ is universal for 
morphisms that are bounded on $S$ from $V$ to affine varieties, 
i.e., every morphism $V\to W'$ into an affine $\R$-variety $W'$ which 
is bounded on $S$ factors uniquely through $\varphi_S$. Note 
that the variety $W$ is normal, provided $V$ is normal (Lemma 
\ref{soriten}(e)).  
We are wondering how to characterize all morphisms $V\to W$ of normal 
affine $\R$-varieties which are of the form $\varphi_S\colon V\to 
\Spec B_V(S)$ for some \sa\ subset $S$ of $V(\R)$ (with $B_V(S)$ 
finitely generated).

Although we do not know a general answer to this question, we can
present the following sufficient condition for birational morphisms:

\begin{prop}\label{oualsbvs}
Let $\varphi\colon V\to W$ be a birational morphism of connected
normal real affine $\R$-varieties. Assume there is a totally real
effective divisor $Z$ on $W$ such that $\varphi$ restricts to an
isomorphism $\varphi^{-1}(W\setminus Z)\isoto W\setminus Z$. Then
there exists a
\sa\ set $S\subset V(\R)$ such that $\varphi^*\R[W]=B_V(S)$.
\end{prop}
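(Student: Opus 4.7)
The plan is to mimic the construction in Lemma~\ref{compnotouch}, producing a compact \sa\ set $S'\subset W(\R)$ that ``sees'' every irreducible component $Z_1,\dots,Z_r$ of $Z$, and then taking $S:=\varphi^{-1}(S')\subset V(\R)$. For each $i$, I would pick a point $p_i\in Z_i(\R)$ that is regular on $W$ and on $Z_i$ and does not lie on any $Z_j$ with $j\ne i$; such a $p_i$ exists because $Z_i$ is real and the subset $(Z_i\cap W_\sing)\cup\bigcup_{j\ne i}(Z_i\cap Z_j)$ has codimension $\ge1$ in $Z_i$. After fixing an affine embedding $W\subset\A^N$ and taking $\epsilon_i>0$ sufficiently small, the closed euclidean balls $B_i:=\{q\in W(\R)\colon\|q-p_i\|\le\epsilon_i\}$ can be arranged to lie in $W_\reg$, to be disjoint from every $Z_j$ with $j\ne i$, and to be pairwise disjoint. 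I then set $S':=\bigcup_{i=1}^r B_i$, a compact \sa\ subset of $W(\R)$, and $S:=\varphi^{-1}(S')$.

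The inclusion $\varphi^*\R[W]\subset B_V(S)$ is immediate: since $S'$ is compact, every $g\in\R[W]$ is bounded on $S'$, so $\varphi^*g$ is bounded on $S=\varphi^{-1}(S')$. For the reverse inclusion, given $f\in B_V(S)$, I use that $\varphi$ restricts to an isomorphism $V\setminus\varphi^{-1}(Z)\isoto W\setminus Z$ to identify $f$ with a rational function $g\in\R(W)$ that is regular on $W\setminus Z$. Since $W$ is normal and affine, it suffices to show $v_{Z_i}(g)\ge0$ for every component $Z_i$ of $Z$, for then $g\in\R[W]$ and $f=\varphi^*g\in\varphi^*\R[W]$.

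To verify $v_{Z_i}(g)\ge0$, fix $i$ and set $S'':=S'\cap(W\setminus Z)$. By the choice of $p_i$ (in particular, its avoidance of the other $Z_j$), the set $B_i\cap(W\setminus Z)$ coincides with $B_i\cap(W\setminus Z_i)$ near $p_i$, and its euclidean closure contains a full euclidean neighborhood of $p_i$ in $Z_i(\R)$, which is Zariski dense in the irreducible divisor $Z_i$. Hence $Z_i$ is compatible with $S''$ in the sense of Definition~\ref{dfncomp}. Supposing for a contradiction that $v_{Z_i}(g)<0$, and noting that $W\setminus Z\subset\dom(g)$, Lemma~\ref{bddfctcompat} applied to $S''$ yields that $g$ is unbounded on $S''\cap\dom(g)=S''$. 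Transporting through the isomorphism $V\setminus\varphi^{-1}(Z)\isoto W\setminus Z$ forces $f$ to be unbounded on $\varphi^{-1}(S'')\subset S$, contradicting $f\in B_V(S)$.

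The main subtlety lies in this last step: a direct application of Lemma~\ref{bddfctcompat} to $S'$ would only give unboundedness of $g$ on $S'\cap\dom(g)$, which a priori could occur at points of $Z\cap\dom(g)$ where $\varphi^{-1}$ is not defined. Replacing $S'$ by $S''=S'\cap(W\setminus Z)$ before applying the lemma confines the unboundedness to the side where $\varphi^{-1}$ is available for transport back to $V$, and this works only because each $p_i$ lies on $Z_i$ alone, which keeps $Z_i$ compatible with the smaller set $S''$.
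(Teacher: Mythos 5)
Your proof is correct and follows essentially the same route as the paper: the paper's Lemma \ref{prescrcomp} likewise produces a compact \sa\ set $T\subset W(\R)$ compatible with every component of $Z$ (with $T\setminus Z(\R)$ dense in $T$), pulls it back along $\varphi$, and uses normality of $W$ together with Lemma \ref{bddfctcompat} to rule out poles along the $Z_i$. Your two variations --- small balls around well-chosen points $p_i$ instead of one large ball with $Z(\R)$ deleted, and passing to $S''=S'\cap(W\setminus Z)$ before invoking Lemma \ref{bddfctcompat} rather than transferring unboundedness by a density-and-continuity argument --- are sound and handle exactly the subtlety the paper addresses with its density condition.
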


For the proof we need the following lemma:

\begin{lem}\label{prescrcomp}
Let $V$ be an affine normal $\R$-variety, and let $Z$ be a given
effective divisor on $V$ which is totally real. There exists a
compact
\sa\ set $S$ in $V(\R)$ which is compatible with every irreducible
component of $Z$, and such that $S\setminus(S\cap Z(\R))$ is dense
in $S$.
\end{lem}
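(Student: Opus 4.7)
The plan is to build $S$ as a finite union of small closed euclidean balls, one centered at a carefully chosen regular real point on each irreducible component of $Z$.

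First I would choose the centers. Let $Z_1,\dots,Z_r$ be the irreducible components of $Z$, and for each $i$ consider the Zariski closed subset $T_i:=V_\sing\cup Z_{i,\sing}\cup\bigcup_{j\ne i}(Z_i\cap Z_j)$ of $Z_i$. Since $V$ is normal, $V_\sing$ has codimension at least $2$ in $V$, so $V_\sing\cap Z_i$ is proper in $Z_i$; the other two pieces are obviously proper in $Z_i$ as well. Because $Z_i$ is real, $Z_i(\R)$ is Zariski dense in $Z_i$, and so one can pick $p_i\in Z_i(\R)\setminus T_i$. At such a point $V$ is smooth, $p_i$ lies on $Z_i$ and on no other component $Z_j$, and $Z_i$ is locally cut out at $p_i$ by a single regular parameter $u_i\in\scrO_{V,p_i}$ with non-vanishing differential.

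Next I would place the balls. Fix an affine embedding $V\subset\A^n$ with coordinates $x_1,\dots,x_n$ and, for small $c_i>0$ to be chosen, set
$$B_i\>:=\>\Bigl\{p\in V(\R)\colon\sum_{k=1}^n\bigl(x_k(p)-x_k(p_i)\bigr)^2\le c_i\Bigr\}.$$
By continuity, for $c_i$ sufficiently small the set $B_i$ lies in the regular locus of $V(\R)$, is disjoint from $Z_j(\R)$ for every $j\ne i$, and sits inside a euclidean neighbourhood of $p_i$ on which $u_i$ is regular with non-vanishing differential---so that $B_i\cap Z_i(\R)$ is a smooth hypersurface in $B_i$ with both ``sides'' accumulating on it. Putting $S:=B_1\cup\cdots\cup B_r$ gives a compact \sa\ subset of $V(\R)$. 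Compatibility with $Z_i$ then follows because $B_i\cap(V\setminus Z_i)(\R)$ euclidean-accumulates on a non-empty open neighbourhood of $p_i$ in $Z_i(\R)$, which is Zariski dense in $Z_i$ since $p_i$ is a regular real point of the irreducible variety $Z_i$. For the density claim, $B_i\cap Z(\R)=B_i\cap Z_i(\R)$ by disjointness from the other components, and this is the zero set of the non-zero regular function $u_i$ on a smooth euclidean piece, hence has empty euclidean interior in $B_i$; so $B_i\setminus(B_i\cap Z(\R))$ is dense in $B_i$, and taking the union over $i$ gives the density of $S\setminus(S\cap Z(\R))$ in $S$.

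The only subtlety is that all three requirements on $B_i$---lying in $V_\reg(\R)$, avoiding the other components, and being ``straightened'' so that $Z_i$ appears as a smooth hypersurface---must hold for a single radius $c_i$. Since each is an open condition stable under shrinking the radius, this is a routine continuity argument rather than a genuine obstacle. The only step requiring real input is the production of the centers $p_i$, which uses precisely the normality of $V$ together with the totally real hypothesis on $Z$.
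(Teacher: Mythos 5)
Your proof is correct, and it reaches the conclusion by a more elementary route than the paper. Both arguments share the same essential first step: using normality of $V$ and the totally real hypothesis to produce, on each component $Z_i$, a real point $p_i$ that is nonsingular on $Z_i$ and on $V$. From there the paper argues in the real spectrum: $p_i$ generalizes to some $\beta_i\in\Sper\R(Z_i)$, which in turn generalizes to some $\alpha_i\in\Sper\R(V)$, and any \sa\ set $S$ with $\alpha_i\in\wt S$ is compatible with $Z_i$ by Lemma~\ref{dfncomp}(ii); the set $S$ is then taken to be the closure of a single large ball with $Z(\R)$ removed, which makes the density condition automatic. You instead verify condition (i) of Lemma~\ref{dfncomp} directly, by local euclidean geometry at $p_i$ (the two sides of the smooth hypersurface $\{u_i=0\}$ accumulate on an open piece of $Z_i(\R)$, which is Zariski dense in the irreducible $Z_i$), and you take $S$ to be a union of small balls around the $p_i$ --- a construction essentially identical to the one the paper itself uses in Lemma~\ref{compnotouch} for a related purpose. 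What the paper's version buys is that compactness and density are immediate from the definition of $S$, at the cost of invoking the specialization formalism; what yours buys is that it avoids the real spectrum entirely, at the cost of the small bookkeeping you acknowledge (choosing one radius $c_i$ making $B_i$ regular, contained in $V_\reg(\R)$, disjoint from the other components, and contained in the locus where $u_i$ cuts out $Z_i$ with nonvanishing differential --- all open conditions, so this is indeed routine). Your choice of $p_i$ off the other components $Z_j$ is a mild extra precaution the paper does not need but which is harmless and simplifies your density argument.
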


\begin{proof}
Let $Z_1,\dots,Z_r$ be the irreducible components of $Z$, and fix
$i\in\{1,\dots,r\}$.
Since $Z_i$ is real and not contained in $V_\sing$, we find a point
$p_i\in Z_i(\R)$ which is nonsingular on $Z_i$ and on $V$. So $p_i$
has a generalization $\beta_i$ in $\Sper\R(Z_i)$, and in turn,
$\beta_i$ has a generalization $\alpha_i$ in $\Sper\R(V)$. Every \sa\
subset $S$ of $V(\R)$ with $\alpha_i\in\wt S$ is compatible with
$Z_i$. If $x_1,\dots,x_n$ is a system of generators of the
$\R$-algebra $\R[V]$, we may therefore define $S$ to be the closure
of the set $\{p\in V(\R)\colon p\notin Z(\R)$, $x_1(p)^2+\cdots+
x_n(p)^2<c\}$, where $c\in\R$ is positive and sufficiently large.
\end{proof}

\begin{proof}[Proof of Prop.\ \ref{oualsbvs}]
By Lemma \ref{prescrcomp} there exists a compact basic closed \sa\
subset $T$ of $W(\R)$ which is compatible with every irreducible
component of the divisor $Z$, and such that $T\setminus(T\cap Z(\R))$
is dense in $T$. Let $S:=\varphi^{-1}(T)\subset V(\R)$.
Since $\varphi(S)$ contains $T\setminus(T\cap Z(\R))$ by the
hypothesis, we have $\ol{\varphi(S)}=T$. By compactness of $T$ it is
clear that $\varphi^*(\R[W])\subset B_V(S)$.
We claim that this inclusion is an equality. Let $f\in\R[V]$ with
$f\notin\varphi^*(\R[W])$. Considering $f$ as a rational function on
$W$, it follows that $f$ has a pole along one of the irreducible
components of $Z$, since $W$ is normal. Since $T$ is compatible with
that component, it follows from Lemma \ref{bddfctcompat} that $f$ is
unbounded on $T\cap\dom_W(f)$. In particular, $f$ is unbounded on
$T\setminus(T\cap Z(\R))$,
and so $f$ is unbounded on $S$.
\end{proof}

%-------------------------------------------------------------------%

\section{Finite generation of the ring of bounded polynomials}

We first recall that $B_C(S)$ is always finitely generated when $C$
ist a curve:

\begin{prop}
Let $C$ be an irreducible affine curve over $\R$, possibly singular,
and let $S\subset C(\R)$ be a \sa\ subset. Assume that $C$ is real.
\begin{itemize}
\item[(a)]
$B_C(S)$ is finitely generated as an $\R$-algebra.
\item[(b)]
If every point of $C$ at infinity is real and lies in the closure of
$S$, then $B_C(S)=\R$.
\item[(c)]
Otherwise $B_C(S)$ has transcendence degree one over $\R$.
\end{itemize}
\end{prop}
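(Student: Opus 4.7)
The plan is to reduce everything to the general machinery of Section~\ref{goco} applied to the unique projective completion $C\into X$ that is non-singular at all added points. By Remark~3 following Theorem~\ref{exgocoreginf}, such $X$ exists, and it is automatically compatible with \emph{every} \sa\ subset $S$ of $C(\R)$, so Theorem~\ref{gococalxbs} applies without further work. The irreducible components of $X\setminus C$ are precisely the finitely many closed points $P_1,\dots,P_m$ of $C$ at infinity, and each is a smooth point of $X$.

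By Theorem~\ref{gococalxbs}, $B_C(S)\cong\scrO_X(U)$ where $U=X\setminus Y$ and $Y$ is the union of those $P_i$ with $\ol S\cap P_i(\R)=\emptyset$ (the closure taken in $X(\R)$). Equivalently, $Y$ collects precisely the non-real $P_i$ together with those real $P_i$ that fail to lie in $\ol S$. This is the single structural observation driving all three parts.

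For (b), the hypothesis is exactly that no $P_i$ is in $Y$, so $Y=\emptyset$ and $U=X$. Since $C$ is real, so is $X$, and $\scrO_X(X)$ is a finite field extension of $\R$ contained in $\R(X)=\R(C)$; as the latter is formally real, this extension equals $\R$, giving $B_C(S)=\R$. For (c), the hypothesis of (b) fails, so $Y$ is a non-empty finite set of smooth closed points of the irreducible projective curve $X$. It is classical that the complement of a non-empty finite set of smooth points on an irreducible projective curve is affine, so $U$ is an irreducible affine $\R$-curve. Hence $B_C(S)=\scrO_U(U)$ is the coordinate ring of an affine curve, which is finitely generated and of transcendence degree one over $\R$.

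Part (a) is now immediate by combining the two cases: in case~(b), $B_C(S)=\R$, and in case~(c), $B_C(S)$ is the coordinate ring of an affine variety. There is no substantive obstacle; the only points worth checking carefully are the identification of $\scrO_X(X)$ with $\R$ in case~(b) (which uses that $X$ is geometrically well-behaved enough so that the algebraic closure of $\R$ in $\R(X)$ is $\R$, a consequence of $\R(C)$ being formally real) and the standard fact used in case~(c) that removing finitely many smooth points from a projective curve yields an affine variety.
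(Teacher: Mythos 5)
Your argument is correct and follows essentially the same route as the paper: take the canonical completion $C\into X$ with $X_\sing\subset C_\sing$, apply Theorem~\ref{gococalxbs} (which, as the paper's own proof and Remark~3 after Theorem~\ref{exgocoreginf} both point out, requires a brief inspection since a singular $C$ is not normal and the theorem is stated for normal $V$), and distinguish the cases $U=X$, where $\scrO_X(X)=\R$ by formal reality of $\R(C)$, and $U\subsetneq X$, where $U$ is an affine curve. There are no gaps.
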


\begin{proof}
Let $C\into X$ be the completion of $C$ for which $X_\sing\subset
C_\sing$ (\ref{excompl}.2), and let $C'\subset X$ be the open
subset which is the union of $C$ and all $\R$-points of $X\setminus
C$ which lie in the closure of $S$. Then $\scrO_X(C')\isoto B_C(S)$.
Indeed, even though Thm.\ \ref{gococalxbs} is not directly applicable
here when $C$ is singular, an inspection of the proof shows that it
applies nevertheless. When $C'\ne X$ then the curve $C'$ is affine,
and so $B_C(S)$ is a finitely generated $\R$-algebra of transcendence
degree one. When $C'=X$ (which means that the hypothesis of (b) is
satisfied) then $B_C(S)=\R$.
\end{proof}

Before proceeding to our main result on finite generation, we study
some consequences for the ring $B_V(S)$ that arise from the existence
of an $S$-compatible completion.

\begin{lemdfn}\label{zarclosinf}
Let $V$ be an irreducible $\R$-variety, and let $S$ be a \sa\ subset
of $V(\R)$. There is a unique smallest Zariski closed subset $Z$ of
$V$ such that $S\subset Z(\R)\cup K$ for some compact subset $K$ of
$V(\R)$. We shall call $Z$ the \emph{Zariski closure of $S$ at
infinity}.
\end{lemdfn}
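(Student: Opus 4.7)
The plan is to establish existence and uniqueness of $Z$ simultaneously, by showing that the collection of Zariski closed subsets $Z \subset V$ satisfying the property is stable under finite intersection, and then invoking Noetherianity of $V$.

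Let $\mathcal F$ denote the family of Zariski closed subsets $Z \subset V$ for which there exists a compact $K \subset V(\R)$ with $S \subset Z(\R)\cup K$. First, $\mathcal F$ is non-empty since $V$ itself belongs to $\mathcal F$ (take $K=\emptyset$).

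The main point is that $\mathcal F$ is closed under finite intersection. Indeed, suppose $Z_1,Z_2\in\mathcal F$ with witnesses $K_1,K_2$. Then for every $x\in S$, either $x\in Z_i(\R)$ for both $i$ and hence $x\in (Z_1\cap Z_2)(\R)$, or else $x\notin Z_i(\R)$ for some $i$, in which case $x\in K_i\subset K_1\cup K_2$. Thus
$$S\>\subset\>(Z_1\cap Z_2)(\R)\cup(K_1\cup K_2),$$
and $K_1\cup K_2$ is compact, so $Z_1\cap Z_2\in\mathcal F$.

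Since $V$ is Noetherian as a topological space, $\mathcal F$ has a minimal element $Z$. For any $Z'\in\mathcal F$, the intersection $Z\cap Z'$ lies in $\mathcal F$ by the previous paragraph and is contained in $Z$; minimality forces $Z\cap Z'=Z$, i.e., $Z\subset Z'$. Therefore $Z$ is the (unique) smallest element of $\mathcal F$, as required. There is no serious obstacle here — the only observation needed is that unboundedness of $S$ modulo a Zariski closed set behaves well under intersection, which is immediate.
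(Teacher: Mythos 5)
Your proof is correct and follows essentially the same route as the paper: the paper's entire argument consists of observing that $S\subset Z_1(\R)\cup K_1$ and $S\subset Z_2(\R)\cup K_2$ imply $S\subset\bigl(Z_1(\R)\cap Z_2(\R)\bigr)\cup(K_1\cup K_2)$, leaving the Noetherian descending-chain step implicit. You have simply spelled out that last step explicitly.
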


\begin{proof}
If Zariski closed subsets $Z_1$, $Z_2$ of $V$ and compact subsets
$K_1$, $K_2$ of $V(\R)$ are given with $S\subset Z_i(\R)\cup K_i$
($i=1,2$), then $S$ is contained in $\bigl(Z_1(\R)\cap Z_2(\R)\bigr)
\cup(K_1\cup K_2)$. This implies the assertion.
\end{proof}

Clearly, $S$ is Zariski dense at infinity (Definition
\ref{dfnreginfty}) if and only if its Zariski closure at infinity is
$V$.

\begin{prop}\label{zarclinfcond}
Let $V$ be an irreducible affine $\R$-variety and $S\subset V(\R)$ a
\sa\ set. Let $Z$ be the Zariski closure of $S$ at infinity, and let
$I_Z$ be the full vanishing ideal of $Z$ in $\R[V]$. Then $I_Z$ is
equal to the conductor of $B_V(S)$ in $\R[V]$, that is, $I_Z$ is the
largest ideal of $\R[V]$ which is contained in $B_V(S)$.
\end{prop}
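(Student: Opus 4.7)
The statement decomposes into the two inclusions $I_Z\subseteq\mathfrak c$ and $\mathfrak c\subseteq I_Z$, where $\mathfrak c$ denotes the conductor $\{f\in\R[V]:f\R[V]\subseteq B_V(S)\}$ of $B_V(S)$ in $\R[V]$. The inclusion $I_Z\subseteq\mathfrak c$ is essentially immediate: since $I_Z$ is already an ideal of $\R[V]$, it suffices to verify $I_Z\subseteq B_V(S)$, and this holds because for any compact $K\subseteq V(\R)$ with $S\subseteq Z(\R)\cup K$ (as given by Lemma \ref{zarclosinf}), every $f\in I_Z$ vanishes on $S\cap Z(\R)$ and is bounded by $\max_K|f|$ on $S\cap K$.

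For the reverse inclusion, I take $f\in\mathfrak c$ and aim to show $f\in I_Z$. Since $I_Z$ is the full vanishing ideal of the (reduced) Zariski closed set $Z$, this is equivalent to the containment $Z\subseteq V(f)$ of Zariski closed subsets of $V$, where $V(f)$ denotes the zero locus of $f$. By the minimality of $Z$ asserted in Lemma \ref{zarclosinf}, such a containment follows once I produce a compact $K_0\subseteq V(\R)$ with $S\subseteq V(f)(\R)\cup K_0$, i.e.\ once I show that
$$S'\>:=\>\{x\in S\colon f(x)\ne0\}$$
has compact closure in $V(\R)$. Fix a closed embedding $V\hookrightarrow\A^N$ with coordinates $x_1,\dots,x_N$. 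Because $f$ lies in the conductor, every monomial $x^\alpha f$ belongs to $B_V(S)$, so summing squares of such monomials yields constants $C_k>0$ with $\|x\|^{2k}f(x)^2\le C_k$ on $S$ for every $k\ge0$.

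The main obstacle is verifying that $S'$ is relatively compact, which I plan to do by a semi-algebraic asymptotic argument. Assume, for contradiction, that $S'$ is unbounded in $\R^N$. The curve selection lemma at infinity (which follows from \cite{BCR} Prop.~2.5.5 via the inversion $x\mapsto x/\|x\|^2$) then produces a continuous semi-algebraic curve $\gamma\colon(0,1]\to S'$ with $\|\gamma(t)\|\to\infty$ as $t\to0^+$. The Puiseux-type asymptotic expansion for one-variable semi-algebraic functions (see \cite{BCR}, Chapter 2) gives $\|\gamma(t)\|\sim c\,t^{-\alpha}$ with $c,\alpha>0$, and, since $f\circ\gamma$ is nowhere zero on $(0,1]$, $|f(\gamma(t))|\sim d\,t^\beta$ with $d>0$ and $\beta\in\Q$. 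Substituting these asymptotics into $\|\gamma(t)\|^{2k}f(\gamma(t))^2\le C_k$ forces $-2k\alpha+2\beta\ge0$, i.e.\ $\beta\ge k\alpha$, for every $k\ge0$, which contradicts $\alpha>0$. Hence $S'$ is relatively compact, and the argument above yields $Z\subseteq V(f)$ and therefore $f\in I_Z$, completing the proof.
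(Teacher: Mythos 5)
Your proof is correct and follows essentially the same route as the paper: the easy inclusion $I_Z\subseteq B_V(S)$ is identical, and for the converse both arguments reduce to showing that the set where a conductor element is nonzero meets $S$ in a bounded set, via a semi-algebraic curve tending to infinity and a Puiseux-type growth comparison (which the paper isolates as Lemma \ref{unbddcurvegerm} and applies to a single unbounded coordinate function, where you instead use all monomials $x^\alpha f$ at once). The differences are purely presentational.
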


\begin{proof}
Since $S\subset Z(\R)\cup K$ for some compact set $K$, it is clear
that $I_Z\subset B_V(S)$. Conversely let $b\in B_V(S)$ with $b\notin
I_Z$.
The subset $\{x\in S\colon b(x)\ne0\}$ of $V(\R)$ is unbounded. For
if its closure $K$ were compact, we would have $S\subset K\cup
\{b=0\}$,
contradicting the definition of $Z$. Hence there exists a
\sa\ curve $\gamma\colon\left[0,\infty\right[\to S$, $t\mapsto
\gamma_t$ such that $b(\gamma_t)\ne0$ for all $t\ge0$ and such that
$\Gamma:=\{\gamma_t\colon t\ge0\}$ is unbounded in $V(\R)$. This last
condition means that there exists $f\in\R[V]$ for which
$f(\gamma_t)$, $t\ge0$, is unbounded.
Since $b\ne0$ on $\Gamma$, it follows from Lemma \ref{unbddcurvegerm}
below that the product $f^nb$ is unbounded on $\Gamma$ for
sufficiently large $n\ge1$. So $bf^n\notin B_V(S)$.
\end{proof}

The following well-known fact was used in the last proof:

\begin{lem}\label{unbddcurvegerm}
Let $f$, $g\colon\left[0,\infty\right[\to\R$ be two continuous \sa\
functions such that $f$ is unbounded and $g>0$ everywhere. Then
$f^ng$ is unbounded for sufficiently large $n\ge1$.
\qed
\end{lem}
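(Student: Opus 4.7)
The plan is to reduce the claim to a comparison of growth exponents at infinity, exploiting the fact that every continuous \sa\ function on a half-line admits a well-defined rational order of growth as $t\to\infty$ (a standard consequence of Puiseux's theorem for \sa\ functions; compare \cite{BCR} \S\,2.6).

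First I would dispose of the zero set of $f$. Since $Z(f)$ is a \sa\ subset of $\left[0,\infty\right[$, it is a finite union of points and closed intervals; if one of these intervals were unbounded it would contain a tail $\left[M,\infty\right[$ on which $f$ vanishes, and then continuity on the compact set $[0,M]$ would make $f$ bounded on all of $\left[0,\infty\right[$, contrary to hypothesis. Hence $f$ is nowhere zero on some $\left[M,\infty\right[$ and, by continuity, of constant sign there. Replacing $f$ by $|f|$ (still continuous, \sa, and unbounded) I may assume $f>0$ on $\left[M,\infty\right[$.

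Next I would apply the asymptotic expansion at infinity to $f$ and $g$, both of which are now continuous and strictly positive for large $t$. This yields positive constants $c_f,c_g>0$ and rational exponents $\alpha,\beta\in\Q$ with
$$f(t)\>=\>c_f\,t^\alpha\bigl(1+o(1)\bigr),\qquad g(t)\>=\>c_g\,t^\beta\bigl(1+o(1)\bigr)\qquad(t\to\infty).$$
Unboundedness of $f$ forces $\alpha>0$. Multiplying gives $f(t)^n g(t)=c_f^n c_g\,t^{n\alpha+\beta}(1+o(1))$, which is unbounded as soon as the exponent $n\alpha+\beta$ is positive, i.e.\ for any integer $n>-\beta/\alpha$ (in particular for all sufficiently large $n$).

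The one delicate point is invoking the asymptotic expansion correctly; once that standard fact is cited, the argument collapses to the one-line comparison of exponents above, and I do not anticipate any further real-algebraic obstacle.
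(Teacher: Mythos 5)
The paper states this lemma as well known and gives no proof, so there is nothing to compare against; your argument is correct and supplies the standard justification. The two ingredients you use are exactly the right ones: a continuous semi-algebraic function on a half-line that is unbounded cannot vanish on a tail (so $|f|$ is eventually positive), and every eventually positive semi-algebraic function of one variable has a well-defined rational order of growth $c\,t^{\alpha}(1+o(1))$ at infinity, whence $\alpha>0$ for $f$ and the exponent comparison $n\alpha+\beta>0$ finishes the proof. The essential point your argument isolates --- that a positive semi-algebraic $g$ can decay at worst like a power of $t$ --- is the same fact one would otherwise extract from the \L ojasiewicz inequality at infinity, so this is the intended reasoning.
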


\begin{lem}\label{conduct}
Let $B\subset A$ be a ring extension, and let $J$ be an ideal of $A$
which is contained in $B$. If $J$ contains an element which is not a
zero divisor of $A$, and if $J$ is finitely generated as an ideal of
$B$, then the extension $B\subset A$ is integral.
\end{lem}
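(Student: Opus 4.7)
The plan is a standard determinant-trick argument. Fix an arbitrary element $a\in A$; the goal is to show that $a$ is integral over $B$.

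First I would observe that, since $J$ is an ideal of $A$, multiplication by $a$ sends $J$ into $J$; and since $J\subset B$ is finitely generated as an ideal of $B$, it is also finitely generated as a $B$-module. So fix generators $j_1,\dots,j_n$ of $J$ over $B$. Writing $a\,j_i=\sum_k b_{ik}j_k$ with $b_{ik}\in B$, I get a matrix identity $(a\cdot I_n-M)\,\mathbf{j}=0$ with $M=(b_{ik})\in B^{n\times n}$ and $\mathbf{j}=(j_1,\dots,j_n)^t$.

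Next I would apply the Cayley--Hamilton trick in the form used for the determinant lemma: multiplying by the adjugate of $a\cdot I_n-M$ yields $p(a)\cdot j_i=0$ for every $i$, where $p(X)=\det(X\cdot I_n-M)\in B[X]$ is monic. Consequently $p(a)\cdot J=0$.

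Finally, the non-zero-divisor hypothesis finishes the argument: pick $j_0\in J$ which is not a zero divisor in $A$. Then $p(a)\cdot j_0=0$ forces $p(a)=0$, so $a$ satisfies a monic polynomial over $B$, as desired. There is no real obstacle here; the only point to watch is that ``finitely generated as a $B$-ideal'' is the same as ``finitely generated as a $B$-module'' for an ideal $J\subset B$, which is clear.
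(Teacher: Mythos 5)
Your argument is correct and complete: the determinant trick applied to the finitely generated $B$-module $J$ (which is stable under multiplication by any $a\in A$ since $J$ is an ideal of $A$) gives a monic $p\in B[X]$ with $p(a)J=0$, and the non-zero-divisor in $J$ then forces $p(a)=0$. The paper proves this lemma simply by citing \cite[Cor.~4.6]{Ei}, and your proof is exactly the standard argument behind that reference, so the two approaches coincide.
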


\begin{proof}
This is a special case of \cite[Cor.~4.6]{Ei}.
\end{proof}

\begin{lem}\label{condzero}
Let $Y$ be a connected normal $\R$-variety, and let $V\subset Y$ be
an affine open subset. If $V\ne Y$ then $\scrO_Y(Y)$ contains no
non-zero ideal of $\R[V]$.
\end{lem}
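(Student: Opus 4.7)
I would prove the contrapositive: assuming a non-zero ideal $J$ of $\R[V]$ is contained in $\scrO_Y(Y)$, I would derive $V=Y$. The strategy is to produce a prime divisor $Z\subset Y\setminus V$ together with an element $g\in\R[V]$ with $v_Z(g)<0$. Once these are in hand the conclusion is immediate: picking any $0\ne f\in J$, one has $fg^n\in J\subset\scrO_Y(Y)\subset\scrO_{Y,Z}$ for every $n\ge0$ (the last inclusion because $Y$ is normal), so $0\le v_Z(fg^n)=v_Z(f)+n\,v_Z(g)$, which fails for large $n$ since $v_Z(g)<0$ and $v_Z(f)$ is a finite integer.

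To produce $Z$: since $Y$ is normal and connected, its local rings are domains, so $Y$ is irreducible and both $\R[V]$ and $\scrO_Y(Y)$ embed in $\R(Y)$. If $Y\setminus V$ had no irreducible component of codimension one, then algebraic Hartogs for normal Noetherian schemes would give $\scrO_Y(Y)=\scrO_Y(V)=\R[V]$. The identity of this ring would then define a morphism $\varphi\colon Y\to\Spec\R[V]=V$ whose restriction to the dense open $V\subset Y$ is $\id_V$; composing with the open immersion $V\into Y$ yields a self-map of $Y$ that equals $\id_Y$ on $V$, hence equals $\id_Y$ by separatedness. This forces $V=Y$, contrary to hypothesis, and so a prime divisor $Z\subset Y\setminus V$ exists.

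The heart of the proof, and the main obstacle, is to construct $g$. I would argue by contradiction: suppose $\R[V]\subset\scrO_{Y,Z}$. Because $V$ is affine, this inclusion of $\R$-algebras defines a morphism $\psi\colon\Spec\scrO_{Y,Z}\to V\into Y$. It agrees on the generic point with the canonical morphism $\iota\colon\Spec\scrO_{Y,Z}\to Y$ (both send it to $\eta_Y$), so by the valuative criterion of separatedness applied to the variety $Y$ we have $\psi=\iota$. But $\iota$ sends the closed point of $\Spec\scrO_{Y,Z}$ to $\eta_Z\notin V$, whereas $\psi$ factors through $V$, a contradiction. Hence the required $g$ exists, completing the proof.
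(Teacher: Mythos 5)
Your proof is correct, and its final step --- killing a putative conductor element by multiplying with high powers of a function that has a pole along a boundary divisor --- is exactly the paper's. Where you differ is in how that pole is produced. The paper argues globally: the restriction $\scrO_Y(Y)\into\R[V]$ cannot be surjective (by the same morphism-plus-separatedness argument you use), and then normality of $Y$ immediately converts any $f\in\R[V]\setminus\scrO_Y(Y)$ into a prime divisor $Z$ with $v_Z(f)\le-1$, a divisor that is automatically disjoint from $V$ because $f$ is regular there. You instead first manufacture a codimension-one component $Z$ of $Y\setminus V$ (via algebraic Hartogs plus that same morphism argument) and then show $\R[V]\not\subset\scrO_{Y,Z}$ by the valuative criterion of separatedness. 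Both routes are sound; the paper's is shorter because it needs only one application of the ``identity morphism $Y\to V$'' trick and lets normality do the rest, while yours is more local and makes explicit two true and occasionally useful facts: that the boundary of an affine open in a normal variety, if nonempty, carries a codimension-one component, and that each such boundary divisor is ``seen'' by a pole of some regular function on $V$. One cosmetic remark: the inclusion $\scrO_Y(Y)\subset\scrO_{Y,Z}$ needs only that $Y$ is integral; normality is what you need so that $\scrO_{Y,Z}$ is a discrete valuation ring and $v_Z$ is defined.
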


\begin{proof}
The inclusion $\scrO(Y)\subset\R[V]$ is proper. Indeed, otherwise
there would be a morphism $Y\to V$ which is the identity on $V$,
which is absurd if $V\ne Y$.
Assume $b\in\scrO(Y)$ is such that $bf\in\scrO(Y)$ for every $f\in
\R[V]$. Choose $f\in\R[V]$ with $f\notin\scrO(Y)$. Since $Y$ is
normal this means that there exists a prime divisor $Z$ of $Y$ with
$v_Z(f)\le-1$. Unless $b=0$ we therefore get $v_Z(bf^n)\le-1$ for
large $n>0$, and so $bf^n\notin\scrO(Y)$. This proves the assertion.
\end{proof}

\begin{prop}\label{condconds}
Let $V$ be a connected normal affine $\R$-variety, and let $S$ be an
unbounded \sa\ subset of $V(\R)$. Consider the following conditions:
\begin{itemize}
\item[(i)]
$V$ has a completion which is compatible with $S$;
\item[(ii)]
the ring $B_V(S)$ is noetherian;
\item[(iii)]
the conductor of $B_V(S)$ in $\R[V]$ is zero;
\item[(iv)]
$S$ is Zariski dense in $V$ at infinity.
\end{itemize}
We have (i) $\To$ (iii), (ii) $\To$ (iii) and (iii) $\iff$ (iv).
\end{prop}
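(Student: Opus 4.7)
My plan is to handle the three implications separately, using (iii) $\iff$ (iv) as a bridge to reach (i) $\To$ (iii). The equivalence (iii) $\iff$ (iv) is immediate from Proposition \ref{zarclinfcond}: that proposition identifies the conductor of $B_V(S)$ in $\R[V]$ with the vanishing ideal $I_Z$ of the Zariski closure $Z$ of $S$ at infinity, so the conductor is zero precisely when $Z=V$.

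For (ii) $\To$ (iii) the argument is purely algebraic. Let $J$ be the conductor of $B_V(S)$ in $\R[V]$; then $J$ is an ideal of $\R[V]$ contained in $B_V(S)$ and hence is also an ideal of $B_V(S)$. Since $V$ is connected and normal it is irreducible, so $\R[V]$ is a domain. Arguing by contradiction, assume $J\ne 0$ and pick $0\ne j\in J$, which is a non-zero-divisor. The noetherian hypothesis makes $J$ finitely generated as a $B_V(S)$-ideal, so Lemma \ref{conduct} forces $B_V(S)\subset\R[V]$ to be integral. Combined with Lemma \ref{soriten}(e) this gives $B_V(S)=\R[V]$, whereupon Lemma \ref{soriten}(b) makes $\ol S$ compact, contradicting unboundedness of $S$.

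For (i) $\To$ (iii) I would establish (iv) directly and invoke the first equivalence. Fix an $S$-compatible completion $V\into X$. The starting point is that $\ol S$ is compact in $X(\R)$ but $S$ is unbounded in $V(\R)$, so $\ol S$ must meet some irreducible component $Y$ of $X\setminus V$; compatibility then gives $\ol S\cap Y(\R)$ Zariski dense in $Y$, while condition (1) of Definition \ref{dfngoodcompl} ensures $\dim Y=\dim V-1$. Now suppose (iv) fails: there exist a compact $K\subset V(\R)$ and a proper Zariski closed $Z\subsetneq V$ with $S\subset K\cup Z(\R)$. Letting $\ol Z$ be the Zariski closure of $Z$ in $X$ (still of dimension $<\dim V$), passing to euclidean closures in $X(\R)$ yields $\ol S\subset K\cup\ol Z(\R)$, and intersecting with $Y(\R)$ (disjoint from $K\subset V(\R)$) gives $\ol S\cap Y(\R)\subset(\ol Z\cap Y)(\R)$. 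The Zariski density then forces $Y\subset\ol Z$, and by the dimension comparison $Y$ is an irreducible component of $\ol Z$. But each such component is the closure of an irreducible component of $Z\subset V$ and therefore meets $V$, contradicting $Y\cap V=\emptyset$.

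The only point that needs real care is the bookkeeping in the last paragraph: one must check that the euclidean closure of $Z(\R)$ in $X(\R)$ sits inside $\ol Z(\R)$ (automatic, since $\ol Z(\R)$ is itself euclidean closed as the real points of a Zariski closed set) and that the DVR condition in Definition \ref{dfngoodcompl}(1) genuinely supplies codimension one for $Y$. These are straightforward, and I do not expect a serious obstacle.
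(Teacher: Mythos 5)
Your proof is correct. For (iii) $\iff$ (iv) and (ii) $\To$ (iii) it matches the paper's argument (the paper's version of (ii) $\To$ (iii) is terser, deducing $J=(0)$ straight from Lemma \ref{conduct}; the step you make explicit --- that integrality of $B_V(S)\subset\R[V]$ plus Lemma \ref{soriten}(e) and (b) yields a contradiction with unboundedness --- is exactly what the paper leaves implicit). The genuine difference is in (i) $\To$ (iii). The paper argues through the ring of sections: Theorem \ref{gococalxbs} identifies $B_V(S)$ with $\scrO_X(U)$ for an open $U$ strictly containing $V$ (strict because $S$ is unbounded), and Lemma \ref{condzero}, a valuation-theoretic fact about normal varieties, then says this ring contains no non-zero ideal of $\R[V]$. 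You instead prove (i) $\To$ (iv) by a direct geometric argument at infinity --- the compact closure $\ol S$ in $X(\R)$ must touch some boundary component $Y$, compatibility makes $\ol S\cap Y(\R)$ Zariski dense in the $(\dim V-1)$-dimensional $Y$, and a failure of Zariski density at infinity would trap this dense set in $\ol Z\cap Y$ for a closed set $\ol Z$ all of whose components meet $V$, which is impossible --- and then route through the equivalence (iv) $\iff$ (iii). Your version buys independence from Theorem \ref{gococalxbs} and Lemma \ref{condzero}, relying only on Definition \ref{dfngoodcompl} and Proposition \ref{zarclinfcond}, at the cost of a slightly longer geometric verification; the paper's version is shorter given that those two results are already established. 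The points you flag as needing care (euclidean closure of $Z(\R)$ lands in $\ol Z(\R)$, the DVR condition gives codimension one, and the dimension count forces $Y$ to be an irreducible component of $\ol Z$) all go through as you describe.
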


\begin{proof}
(i) $\To$ (iii):
Let $V\into X$ be a completion which is compatible with $S$, let $Y$
be the union of the irreducible components $Z$ of $X\setminus V$
with $Z(\R)=\emptyset$, and let $U=X\setminus Y$. By Thm.\
\ref{gococalxbs}, the inclusion $V\into U$ induces an isomorphism
$\scrO(U)\isoto B_V(S)$. Since $S$ is unbounded we have $U\ne V$. By
Lemma \ref{condzero}, $B_V(S)$ contains no non-zero ideal of $\R[V]$.

(ii) $\To$ (iii):
We have $B\ne\R[V]$ since $S$ is unbounded. Let $J$ be an ideal of
$\R[V]$ which is contained in $B_V(S)$. Since $B$ is noetherian,
Lemma \ref{conduct} implies $J=(0)$.

The equivalence of (iii) and (iv) follows from Prop.\
\ref{zarclinfcond}.
\end{proof}

\begin{cor}\label{thinnotfg}
Let $V$ be a connected normal affine $\R$-variety, and let $S$ be an
unbounded \sa\ subset of $V(\R)$. If $S$ is not Zariski dense at
infinity, then $B_V(S)$ is not noetherian.
\end{cor}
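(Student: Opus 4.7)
The plan is to derive the corollary as an immediate contrapositive application of Proposition \ref{condconds}. Since $V$ is connected and normal, it is irreducible, so $\R[V]$ is a domain, and the hypotheses of that proposition are in force.

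First I would unpack the assumption: saying that $S$ is \emph{not} Zariski dense at infinity means, by Definition \ref{dfnreginfty}(c), that there is a compact $K\subset V(\R)$ for which $S\setminus(K\cap S)$ is contained in $Z(\R)$ for some proper Zariski closed subset $Z$ of $V$. Equivalently, the Zariski closure $Z$ of $S$ at infinity (Lemma \ref{zarclosinf}) is a proper subset of $V$, so its vanishing ideal $I_Z\subset\R[V]$ is non-zero. By Proposition \ref{zarclinfcond}, this $I_Z$ is the conductor of $B_V(S)$ in $\R[V]$; hence condition (iii) of Proposition \ref{condconds} fails.

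Next I would invoke the implication (ii) $\To$ (iii) of Proposition \ref{condconds}, in its contrapositive form: the failure of (iii) forces the failure of (ii), i.e., $B_V(S)$ cannot be noetherian. This is the whole argument. One could also spell out the mechanism, which is already visible in the proof of Proposition \ref{condconds}: if $B_V(S)$ were noetherian, then the non-zero ideal $I_Z$ of $\R[V]$ (which lies in $B_V(S)$ and contains a non-zero divisor because $\R[V]$ is a domain) would be finitely generated over $B_V(S)$, and Lemma \ref{conduct} would then make $\R[V]$ integral over $B_V(S)$; combined with the integral closedness of $B_V(S)$ in $\R[V]$ from Lemma \ref{soriten}(e), this would force $B_V(S)=\R[V]$, contradicting unboundedness of $S$ via Lemma \ref{soriten}(b).

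There is essentially no obstacle here: everything needed is already packaged into Proposition \ref{condconds} and Lemma \ref{soriten}. The only thing to be careful about is ensuring the hypotheses of Proposition \ref{condconds} genuinely apply (connected normal affine $\R$-variety, unbounded \sa\ set), which is exactly what is assumed in the corollary, and that $\R[V]$ is a domain so the conductor argument has a non-zero divisor to work with; this follows because a connected normal variety is irreducible.
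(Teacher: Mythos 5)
Your proof is correct and follows essentially the same route as the paper: the paper likewise observes that the vanishing ideal of the Zariski closure of $S$ at infinity is a proper non-zero ideal contained in $B_V(S)$, so the conductor is non-zero, and then invokes the implication (ii) $\To$ (iii) of Proposition \ref{condconds}. Your additional unpacking of the conductor/integrality mechanism is accurate but already contained in the proofs of Proposition \ref{condconds} and Lemma \ref{conduct}.
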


\begin{proof}
The vanishing ideal of the Zariski closure of $S$ at infinity is a
proper non-zero ideal of $\R[V]$ that is contained in $B_V(S)$. Hence
the conductor of $B_V(S)$ in $\R[V]$ is non-zero, and so $B_V(S)$ is
not noetherian by Prop.\ \ref{condconds}.
\end{proof}

\begin{rem}
Condition (i) in Prop.\ \ref{condconds} implies $B_V(S)\cong\scrO_U
(U)$ for some $\R$-variety $U$ (Thm.\ \ref{gococalxbs}). In general,
the ring $\scrO_U (U)$, and hence also $B_V(S)$, may fail to be
noetherian, see \ref{vakilex} below. Therefore, (i) does not imply
(ii) in Prop.\ \ref{condconds}.
\end{rem}

The implication (ii) $\To$ (iv) in Prop.\ \ref{condconds} can be
strengthened as follows:

\begin{prop}\label{niederdimteil}
Let $V$ be a normal affine $\R$-variety, and let $S$ be a \sa\ subset
of $V(\R)$ for which the ring $B_V(S)$ is noetherian. Then for every
\sa\ subset $T\subset S$ with $B_V(S)\ne B_V(T)$, the inequality
\begin{equation}\label{strangeq}
\trdeg_\R B_V(T)\>\le\>\dim(S\setminus T)
\end{equation}
holds.
\end{prop}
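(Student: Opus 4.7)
The plan is to argue by contradiction, combining Lemma \ref{conduct} with the integral closedness of $B_V(S)$ in $\R[V]$ from Lemma \ref{soriten}(e). Set $A := B_V(T)$ and $B := B_V(S)$, so $B \subsetneq A \subset \R[V]$ by hypothesis. Suppose for contradiction that $r := \trdeg_\R A > d := \dim(S \setminus T)$. The goal is to exhibit a nonzero ideal $\mathfrak{a}$ of the larger ring $A$ which is contained in $B$ and contains a non-zero divisor of $A$; for once such $\mathfrak{a}$ is in hand, noetherianity of $B$ makes $\mathfrak{a}$ finitely generated as a $B$-ideal, Lemma \ref{conduct} forces the inclusion $B \subset A$ to be integral, and Lemma \ref{soriten}(e) then yields $A = B$, contradicting the assumption $B_V(S) \ne B_V(T)$.

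To build $\mathfrak{a}$, choose algebraically independent $f_1, \ldots, f_r \in A$ and consider the morphism $\varphi = (f_1, \ldots, f_r)\colon V \to \A^r$. Since each $f_i$ is bounded on $T$, the image $\varphi(T)$ is bounded. The \sa\ set $\varphi(S \setminus T)$ has dimension at most $d < r$, so its Zariski closure in $\A^r$ is a proper subvariety, and one can pick a nonzero polynomial $h \in \R[y_1, \ldots, y_r]$ vanishing on it. Set $g := h(f_1, \ldots, f_r) \in \R[V]$. By algebraic independence of the $f_i$, $g \ne 0$, and by construction $g$ vanishes identically on $S \setminus T$.

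Now define
$$\mathfrak{a} \>:=\> \{b \in A \colon b|_{S \setminus T} = 0\}.$$
This is an ideal of $A$ because $A$ is a subring of $\R[V]$ and the vanishing condition is preserved by multiplication from $A$. It is contained in $B$: any $b \in \mathfrak{a}$ is bounded on $T$ (as $b \in A = B_V(T)$) and zero on $S \setminus T$, hence bounded on all of $S$. Moreover $g$ belongs to $\mathfrak{a}$, so $\mathfrak{a} \ne (0)$. Reducing to the irreducible case (a connected normal variety is irreducible, and otherwise one argues component-by-component), $A$ sits inside the integral domain $\R(V)$, so $g$ is a non-zero divisor of $A$, and the hypotheses of Lemma \ref{conduct} are verified for the pair $B \subset A$ with $J = \mathfrak{a}$. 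The contradiction then follows as outlined in the first paragraph.

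The crux is the nonvanishing of $\mathfrak{a}$, which is exactly where the hypothesis $r > d$ enters: the dimension count $\dim \varphi(S \setminus T) \le d < r$ produces the algebraic relation $h$ on $\A^r$ and hence the nonzero element $g$. Everything else is essentially bookkeeping, and the noetherianity of $B_V(S)$ is used only in the final appeal to Lemma \ref{conduct}, which is precisely the mechanism that lets a dimensional statement about $S \setminus T$ interact with an algebraic statement about $B_V(T)$.
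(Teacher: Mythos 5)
Your proof is correct and follows essentially the same route as the paper: your ideal $\mathfrak{a}$ is exactly $I_Z\cap B_V(T)$ for $Z$ the Zariski closure of $S\setminus T$, and the conclusion via Lemma \ref{conduct} together with the integral closedness of $B_V(S)$ is identical. The only cosmetic difference is that you exhibit a nonzero element of this ideal explicitly by pulling back a relation along $(f_1,\dots,f_r)\colon V\to\A^r$, whereas the paper gets the same nonvanishing by comparing transcendence degrees of $B_V(T)/(I_Z\cap B_V(T))$ and $\R[V]/I_Z$.
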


Assume that $S$ is unbounded and (for simplicity) closed, and that
$B_V(S)$ is noetherian. Then conclusion \eqref{strangeq} implies for
any compact subset $T$ of $S$ that $S\setminus T$ is Zariski dense in
$V(\R)$. This means that $S$ is Zariski dense at infinity, and shows
that the implication  (ii) $\To$ (iv) of \ref{condconds} is contained
in Prop.\ \ref{niederdimteil}.

\begin{proof}
Let $Z$ be the Zariski closure of $S\setminus T$ in $V$, and let $I=
I_Z$ be the vanishing ideal of $Z$. Then $B(T)/I\cap B(T)\subset
\R[V]/I$, and so
$$\trdeg_\R B(T)/I\cap B(T)\>\le\>\trdeg_\R\R[V]/I\>=\>\dim Z\>=\>
\dim(S\setminus T).$$
If \eqref{strangeq} were false we would conclude $I\cap B(T)\ne(0)$.
Now the ideal $I\cap B(T)$ of $B(T)$ is contained in $B(S)$. This
contradicts Lemma \ref{conduct} since $B(S)$ is integrally closed in
$B(T)$ (even in $\R[V]$).
\end{proof}

We now consider algebraic surfaces. Theorem \ref{bvsfg} below is one
of our main results. It is based on the results of Sections
\ref{goco} and \ref{exgoco} and on the following theorem:

\begin{thm}[Zariski]\label{zar}
Let $W$ be a normal quasi-projective surface over a field $k$ of
characteristic zero. Then the ring $\scrO_W(W)$ is finitely generated
as a $k$-algebra.
\end{thm}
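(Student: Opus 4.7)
The plan is to reduce to the smooth projective case by resolution of singularities, identify $\scrO_W(W)$ with the union $\bigcup_{n\ge0}H^0(X,\scrO_X(nE))$ of section spaces for the boundary divisor $E=X\setminus W$, and then invoke Zariski's classical theorem on finite generation of such section rings on surfaces. For the reduction, I would take any projective closure of $W$ and normalize it, obtaining a normal projective surface $X_0$ containing $W$ as an open subset. Blowing up the finitely many isolated points of $X_0\setminus W$, we may assume $X_0\setminus W$ is pure of codimension one. Applying resolution of singularities (available in characteristic zero) gives a proper birational morphism $\pi\colon X\to X_0$ with $X$ smooth projective; setting $\wt W:=\pi^{-1}(W)$, the map $\wt W\to W$ is proper birational between the smooth surface $\wt W$ and the normal surface $W$, so $\pi_*\scrO_{\wt W}=\scrO_W$ and therefore $\scrO_{\wt W}(\wt W)=\scrO_W(W)$. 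Replacing $W$ by $\wt W$, we may assume $W$ is smooth and open in a smooth projective surface $X$ with complement an effective divisor $E$ having components $Z_1,\dots,Z_r$.

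Next, unwinding the definitions, a rational function $f\in k(X)^*$ is regular on $W$ precisely when $v_Z(f)\ge0$ for every prime divisor $Z\not\subset E$, which is in turn equivalent to $(f)+nE\ge0$ for $n=\max\{0,-v_{Z_1}(f),\dots,-v_{Z_r}(f)\}$. This yields
$$\scrO_W(W)\>=\>\bigcup_{n\ge0}H^0\bigl(X,\scrO_X(nE)\bigr),$$
an ascending union inside $k(X)$; it is a subring because $H^0(X,\scrO_X(nE))\cdot H^0(X,\scrO_X(mE))\subset H^0(X,\scrO_X((n+m)E))$.

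The key input, and the main obstacle, is Zariski's own theorem: for any effective divisor $E$ on a smooth projective surface $X$, the graded $k$-algebra $R(X,E):=\bigoplus_{n\ge0}H^0(X,\scrO_X(nE))$ is finitely generated. Zariski's proof rests on his decomposition $E=P+N$ into a nef part $P$ and an effective part $N=\sum n_iN_i$ whose components have negative-definite intersection matrix and satisfy $P\cdot N_i=0$, together with a delicate analysis of when $|mP|$ becomes base-point free. Granting this, fix homogeneous generators $s_1,\dots,s_m$ of $R(X,E)$. The natural ``forget the grading'' ring homomorphism $R(X,E)\to\scrO_W(W)\subset k(X)$ is surjective by the previous paragraph, so $\scrO_W(W)=k[s_1,\dots,s_m]$ is a finitely generated $k$-algebra.
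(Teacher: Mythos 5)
First, note that the paper does not prove this theorem at all: it is quoted from Zariski's 1954 article (with a modern account in Krug's Diplomarbeit cited alongside), so your attempt has to stand on its own. Your reduction steps are fine and standard: passing to a resolution $X$ of a normal projective closure, using $\pi_*\scrO_{\wt W}=\scrO_W$ (proper birational onto a normal target) to preserve global sections, arranging the boundary to be a reduced effective divisor $E$, and identifying $\scrO_W(W)$ with the ascending union $\bigcup_{n\ge0}H^0(X,\scrO_X(nE))$ inside $k(X)$ are all correct.

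The gap is precisely in the step you flag as the key input: it is \emph{false} that the section ring $R(X,E)=\bigoplus_{n\ge0}H^0(X,\scrO_X(nE))$ of an effective divisor on a smooth projective surface is finitely generated. Zariski himself produced the standard counterexample: blow up $\P^2$ in twelve very general points of a smooth cubic $C$ (over $\C$, say), let $\wt C$ be the strict transform and $H$ the pullback of a line, and set $E=\wt C+H$. This is a reduced effective divisor, $E$ is even nef and big, but $E\cdot\wt C=0$ and $\scrO_X(E)|_{\wt C}$ is a degree-zero non-torsion line bundle, so $\wt C$ is a fixed component of $|nE|$ for every $n\ge1$ and $R(X,E)$ is \emph{not} finitely generated; equivalently, the positive part of the Zariski decomposition (here $E$ itself) is nef but not semi-ample, so the ``delicate analysis of when $|mP|$ becomes base-point free'' that you invoke has a negative outcome in general. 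Your final paragraph therefore has no generators $s_1,\dots,s_m$ to push forward. The underlying point is that the union $\bigcup_nH^0(X,\scrO_X(nE))$ is a genuinely coarser object than the graded ring --- it is really the multigraded union over all $H^0(X,\scrO_X(\sum_in_iZ_i))$, i.e.\ it allows independent pole orders along the several components --- and its finite generation in dimension two is exactly the content of Zariski's theorem; it neither follows from nor is equivalent to finite generation of $R(X,E)$ (in the example above, $\scrO_X(X\setminus(\wt C\cup H))$ \emph{is} finitely generated while $R(X,E)$ is not). Repairing the argument requires importing Zariski's (or Nagata's) actual proof for the ring of an open subset, not the statement about section rings of divisors.
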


See Zariski \cite{Z}, remarks on Theorem~1 at the end of the article.
An alternative proof in modern language was given in \cite{Kr}, based
on ideas of Nagata.

\begin{thm}\label{bvsfg}
Let $V$ be a connected normal affine surface over $\R$. For every
\sa\ subset $S$ of $V(\R)$ which is regular at infinity, the
$\R$-algebra $B_V(S)$ is finitely generated.
\end{thm}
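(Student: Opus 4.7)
The plan is to assemble three results that have been set up in the preceding sections. First, I apply Theorem \ref{exgocoreginf} to the normal affine (hence quasi-projective) connected surface $V$ and the \sa\ set $S$, which by hypothesis is regular at infinity. This produces a normal projective completion $V \into X$ that is compatible with $S$. Next, I apply Theorem \ref{gococalxbs}: letting $Y$ be the union of those irreducible components of $X \setminus V$ disjoint from $\ol S$, and setting $U := X \setminus Y$, the inclusion $V \subset U$ induces a ring isomorphism $\scrO_X(U) \isoto B_V(S)$. Note that $\scrO_X(U) = \scrO_U(U)$ by definition of the restricted structure sheaf.

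Finally, since $X$ is projective and normal and $U$ is open in $X$, the variety $U$ is a normal quasi-projective surface over $\R$. Because $\R$ has characteristic zero, Zariski's Theorem \ref{zar} applies and gives that $\scrO_U(U)$ is a finitely generated $\R$-algebra. Transporting across the isomorphism, $B_V(S)$ is finitely generated.

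The content of the proof is entirely in the imported ingredients; the final argument is just checking that the hypotheses of each of the three results fit together. The two nontrivial inputs are (i) the existence of an $S$-compatible completion, which in Theorem \ref{exgocoreginf} rests on embedded resolution of curves on a surface applied to the Zariski closure of the boundary $\partial \ol S$ (this is where the hypothesis ``regular at infinity'' enters, guaranteeing that $\ol S$ actually fills out the relevant boundary divisors) and (ii) Zariski's finite generation theorem for global sections on normal quasi-projective surfaces. The only small point that needs a moment of verification is that the completion $X$ produced by Theorem \ref{exgocoreginf} is projective and normal (not merely a compatible completion in the sense of Definition \ref{dfngoodcompl}), so that $U = X \setminus Y$ qualifies as a normal quasi-projective surface; this is explicitly part of the statement of Theorem \ref{exgocoreginf}.
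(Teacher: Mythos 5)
Your proof is correct and follows exactly the same route as the paper: Theorem \ref{exgocoreginf} to get the $S$-compatible projective completion, Theorem \ref{gococalxbs} to identify $B_V(S)$ with $\scrO_X(U)$, and Zariski's Theorem \ref{zar} to conclude. One tiny quibble: normality of $X$ is established in the \emph{proof} of Theorem \ref{exgocoreginf} (property (1) there) rather than being explicit in its statement, but this does not affect the argument.
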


\begin{proof}
By Theorem \ref{exgocoreginf}, $V$ has a projective completion
$V\into X$ which is compatible with $S$. If $Y$ is the union of all
irreducible components of $X\setminus V$ without a real point, then
$\scrO_X(X\setminus Y)\isoto B_V(S)$ by Theorem \ref{gococalxbs}.
This implies the assertion, since $\scrO_X(X\setminus Y)$ is finitely
generated by Zariski's theorem \ref{zar}.
\end{proof}

\begin{lab}\label{vakilex}
It is folklore knowledge that Zariski's theorem \ref{zar} ceases to
be true in dimensions larger than two. However, it seems not so easy
to localize a reference for this fact in the published literature.

A construction of a quasi-projective non-singular threefold whose
ring of global sections is not finitely generated was given by Vakil
\cite{Va}: Take an elliptic curve $E$ over a field $k$, and let $L$,
$L'$ be invertible sheaves on $E$ such that $\deg(L')>0$, $\deg(L)=0$
and $L$ is not torsion in $\Pic(E)$. Let $W$ be the total space of
the vector bundle $L\oplus L'$ on $E$. Then the ring $\scrO_W(W)$ is
not noetherian. By a variation of this construction one can also
obtain another example which is even quasi-affine (see \cite{Va}).

Using this construction for $k=\R$ in a case where the elliptic curve
$E$ is real, we can apply Theorem \ref{owalsbs} and conclude:
\end{lab}

\begin{cor}\label{bvsnotft}
There exists a non-singular affine $\R$-variety $V$ of dimension
three and a regular and basic closed \sa\ subset $S$ of $V(\R)$ for
which the ring $B_V(S)$ is not noetherian.
\qed
\end{cor}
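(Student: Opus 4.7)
The plan is to combine Vakil's construction of \ref{vakilex} with Theorem \ref{owalsbs}. The starting ingredient is a real elliptic curve $E$ over $\R$, i.e.\ a smooth projective genus-one curve with $E(\R)\ne\emptyset$, together with a group structure coming from the choice of an origin $o\in E(\R)$. Since the Lie group $E(\R)$ is uncountable while its torsion subgroup is countable, one can pick a non-torsion point $p\in E(\R)$ and set $L:=\scrO_E(p-o)\in\Pic^0(E)$. This is a real line bundle of degree zero which is non-torsion in $\Pic(E)$. For $L'$ one may take any real line bundle of positive degree, e.g.\ $L':=\scrO_E(o)$.

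Next, I would form $W$ as the total space of the rank-two vector bundle $L\oplus L'$ over $E$. Then $W$ is a non-singular quasi-projective $\R$-threefold, connected since $E$ is, and real because every fibre over a point of $E(\R)$ is a copy of $\R^2$, which is Zariski dense in the corresponding complex fibre; in particular, $W$ is normal. By Vakil's theorem (recalled in \ref{vakilex}) applied to this data, the ring $\scrO_W(W)$ is not noetherian.

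I would then apply Theorem \ref{owalsbs} to the connected normal real quasi-projective $\R$-variety $W$. This produces an open affine subset $V\subset W$ and a regular, basic closed semi-algebraic subset $S\subset V(\R)$ with $\scrO_W(W)\isoto B_V(S)$. Since $V$ is open in the non-singular threefold $W$, it is itself a non-singular affine $\R$-variety of dimension three; and $B_V(S)$, being isomorphic to $\scrO_W(W)$, fails to be noetherian. This yields the desired example.

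The only slightly delicate point in the plan is ensuring that Vakil's construction can be realised over $\R$ in such a way that the resulting threefold is real; but this reduces to the standard existence of a real elliptic curve carrying a non-torsion real divisor class of degree zero, so no essential obstacle arises.
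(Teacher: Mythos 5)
Your proposal is correct and follows exactly the route the paper takes: instantiate Vakil's construction (\ref{vakilex}) over $\R$ with a real elliptic curve and then apply Theorem \ref{owalsbs} to realize $\scrO_W(W)$ as some $B_V(S)$. The only difference is that you spell out the choice of $L=\scrO_E(p-o)$ with $p$ a non-torsion real point and verify that $W$ is connected, normal, real and non-singular of dimension three, details the paper leaves implicit; these checks are all sound.
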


\begin{lab}
  From results of Kuroda \cite{Ku}, it is possible to conclude the
  existence of rational, normal, quasi-affine threefolds whose ring of
  regular functions is not finitely generated. This was pointed out to
  us by Sebastian Krug, answering a question in an earlier version of
  this paper. The details of the construction, which he communicated
  to us, are not presented here.
\end{lab}

\begin{lab}
By Zariski's theorem, an example as in \ref{vakilex} cannot exist in
dimension two. However, there are examples of (non-normal)
irreducible quasi-projective surfaces $W$ for which $\scrO_W(W)$ is
not finitely generated. One such construction of a quasi-affine
example is due to Nagata (\cite{Na1}, \cite{Na2}, see also \cite{Kr}
for a detailed account).

The construction of two-dimensional varieties whose ring of global
sections is not finitely generated becomes much easier when we allow
the variety to be reducible. Here is an example (again see \cite{Kr}
for details):
In affine $3$-space with coordinates $(x,y,z)$ let $U:=V(xy)$ (the
transversal union of two planes) and $L:=V(y,z)$ (a line in one of
the planes which intersects the other plane transversely). Then $W:=
U\setminus L$ is a reducible quasi-affine variety of dimension two
for which the ring $\scrO_W(W)$ is not noetherian.

To interpret this example in terms of bounded functions, let
$$V=U\setminus(U\cap V(z))=W\setminus V(x,z),$$
an open affine subvariety of $W$, and write $L'=V(x,z)$. Let $S_0$ be
a regular, compact and basic closed \sa\ subset of $U(\R)$ which is
disjoint from $L$ and compatible with $L'$. For example, $S_0=
\{(0,y,z)\colon(y-2)^2+z^2\le1\}$ will do. Then $S=S_0\cap V(\R)$ is
basic closed in $V(\R)$, and $B_V(S)=\scrO_W(W)$ is not noetherian.
\end{lab}

%===================================================================%


\begin{thebibliography}{22}

\bibitem{Be}
E.~Becker:
The real holomorphy ring and sums of $2n$-th powers.
In: \emph{Real Algebraic Geometry and Quadratic Forms} (Rennes,
1981), Lect.\ Notes Math.\ \textbf{959}, Springer, Berlin, 1982,
pp.\ 139--181.

\bibitem{BP}
E.~Becker, V.~Powers:
Sums of powers in rings and the real holomorphy ring.
J.~reine angew.\ Math.\ \textbf{480}, 71--103 (1996).

\bibitem{BCR}
J.~Bochnak, M.~Coste, M.-F.~Roy:
\emph{Real Algebraic Geometry}.
Erg.\ Math.\ Grenzgeb.\ (3) \textbf{36}, Springer, Berlin, 1998.

\bibitem{Cu}
S.~D. Cutkosky:
\emph{Resolution of Singularities}.
Grad.\ Studies Math.\ \textbf{63}, AMS, Providence, RI, 2004.

\bibitem{Ei}
D.~Eisenbud.
\emph{Commutative Algebra with a View Toward Algebraic Geometry},
Grad.\ Texts Math.\ \textbf{150}, Springer, New York, 1995.

\bibitem{Ha}
R.~Hartshorne:
\emph{Algebraic Geometry}.
Grad.\ Texts Math.\ \textbf{52}, Springer, New York, NY, 1977.

\bibitem{Ii}
S.~Iitaka:
\emph{Algebraic Geometry}.
Grad.\ Texts Math.\ \textbf{76}, Springer, New York, 1982.

\bibitem{Jo}
J.-P. Jouanolou:
\emph{Th\'eor\`emes de Bertini et applications}.
Prog.\ Math.\ \textbf{42}, Birkh\"auser, Boston, MA, 1983.

\bibitem{Kr}
S.~Krug:
Der Ring der globalen regul\"aren Funktionen einer algebraischen
Variet\"at.
Diplom\-arbeit, Konstanz 2008.

\bibitem{Ku}
S.~Kuroda:
A counterexample to the fourteenth problem of Hilbert in dimension
three.
Michigan J.~Math.\ \textbf{53}, 123--132 (2005).

\bibitem{Ma}
M.~Marshall:
Approximating positive polynomials using sums of squares.
Canad.\ Math.\ Bull.\ \textbf{46}, 400--418 (2003).

\bibitem{Mo}
J.-P. Monnier:
Anneaux d'holomorphie et Positivstellensatz archim\'edien.
Manuscr.\ math.\ \textbf{97}, 269--302 (1998).

\bibitem{Na1}
M.~Nagata:
Addition and corrections to my paper ``A treatise on the 14-th
problem of Hilbert''.
Mem.\ Coll.\ Sci.\ Univ.\ Kyoto Ser.~A Math.\ \textbf{30},
197--200 (1957).

\bibitem{Na2}
M. Nagata:
Lectures on the fourteenth problem of Hilbert.
TIFR, Bombay, 1965.

\bibitem{Pl}
D.~Plaumann:
Sums of squares on reducible real curves.
Math.\ Z. (2010, to appear).

\bibitem{Ro}
M.~Roggero:
Sui sistemi lineari e il gruppo delle classi di divisori di una
variet\`a reale.
Ann.\ Mat.\ Pura Appl.\ (4) \textbf{135}, 349--362 (1984).

\bibitem{sch:sln}
C. Scheiderer:
\emph{Real and \'Etale Cohomology}.
Lect.\ Notes Math.\ \textbf{1588}, Springer, Berlin, 1994.

\bibitem{Sch:TAMS}
C. Scheiderer:
Sums of squares of regular functions on real algebraic varieties.
Trans.\ Am.\ Math.\ Soc.\ \textbf{352}, 1039--1069 (1999).

\bibitem{Sch:MZ}
C. Scheiderer:
Sums of squares on real algebraic curves.
Math.~Z.\ \textbf{245}, 725--760 (2003).

\bibitem{Sw:hol}
M.~Schweighofer:
Iterated rings of bounded elements and generalizations of
Schm\"udgen's Positivstellensatz.
J.~reine angew.\ Math. \textbf{554}, 19--45 (2003).

\bibitem{Va}
R.~Vakil:
An example of a nice variety whose ring of global sections is not
finitely generated.
Manuscript (2000), \url{math.stanford.edu/~vakil/files/nonfg.pdf}

\bibitem{Z}
O.~Zariski:
Interpr\'etations alg\'ebrico-g\'eom\'etriques du quatorzi\`eme
probl\`eme de Hilbert.
Bull.\ Sci.\ Math.\ (2) \textbf{78}, 155--168 (1954).
\end{thebibliography}
\end{document}